\documentclass{amsart}

\usepackage{amsmath,amsthm}
\usepackage{amssymb}
\usepackage{mathtools}
\usepackage{amscd}
\usepackage{url}

\newtheorem{thm}{Theorem}[section]
\newtheorem{prop}[thm]{Proposition}
\newtheorem{lem}[thm]{Lemma}
\theoremstyle{definition}
\newtheorem{rem}{Remark}
\newtheorem{defi}{Definition}[section]
\newtheorem{assump}{Assumption}[section]

\newcommand{\Z}{\mathbb{Z}}
\newcommand{\Q}{\mathbb{Q}}
\newcommand{\F}{\mathbb{F}}
\newcommand{\Comp}{\mathbb{C}}
\newcommand{\set}[2]{\{#1\ |\ #2\}}
\newcommand{\Order}{\mathcal{O}}
\newcommand{\Orderl}[2]{\Order_{#2}^{{(#1)}}}
\newcommand{\ia}{\mathfrak{a}}
\newcommand{\ib}{\mathfrak{b}}
\newcommand{\iq}{\mathfrak{q}}
\newcommand{\ipe}{\mathfrak{p}}
\newcommand{\iPe}{\mathfrak{P}}
\newcommand{\End}{\mathrm{End}}

\newcommand{\orientdef}[1]{\varphi\circ\iota(#1)\circ\hat{\varphi}}
\newcommand{\cl}{\mathcal{C}\hspace{-1.8pt}\ell}
\newcommand{\SSpr}[1]{\mathrm{SS}^{\mathrm{pr}}_{#1}(p)}
\newcommand{\Ell}{\mathcal{E}\hspace{-1pt}\ell\hspace{-1pt}\ell}
\newcommand{\Emod}[2]{\widetilde{#1}}
\newcommand{\Incmod}[3]{[#1]_{\Emod{#2}{#3}}}

\newcommand{\cdotsp}{\ \cdot\ }
\newcommand{\setj}[1]{\mathcal{J}_{#1}}
\newcommand{\Gal}[2]{\mathrm{Gal}(#1/#2)}
\newcommand{\redmap}[1]{\rho}

\newcommand{\SSprm}[1][\Order]{\redmap{\ipe}(\Ell(#1))}
\newcommand{\Graph}{G_\ell(K, p)}
\newcommand{\Legendre}[2]{\left(\frac{#1}{#2}\right)}

\begin{document}
    \title{On oriented supersingular elliptic curves}

    \author{Hiroshi Onuki}
    \address{Derparment of Mathematical Informatics, The University of Tokyo,
        7-3-1 Hongo, Bunkyo-ku, Tokyo 113-8656, Japan}
    \curraddr{}
    \email{onuki@mist.i.u-tokyo.ac.jp}
    \thanks{This work was supported by JST CREST Grant Number JPMJCR14D6, Japan.}

    \subjclass[2010]{Primary: 11G07, 11Z05}
    \keywords{Supersingular elliptic curves, isogeny graphs}
    \date{}
    \dedicatory{}

    \begin{abstract}
        We revisit theoretical background on OSIDH (Oriented Supersingular Isogeny Diffie-Hellman protocol),
        which is an isogeny-based key-exchange protocol proposed by Col\`o and Kohel at NutMiC 2019.
        We give a proof of a fundamental theorem for OSIDH.
        The theorem was stated by Col\`o and Kohel without proof.
        Furthermore,
        we consider parameters of OSIDH,
        give a sufficient condition on the parameters for the protocol to work,
        and estimate the size of the parameters for a certain security level.
    \end{abstract}

    \maketitle

    \section{Introduction}
Isogeny-based cryptography is based on hardness of the isogeny problem,
that is a problem to find an isogeny between two given elliptic curves.
The isogeny problem is considered to be hard even if one uses a quantum computer.
Therefore, isogeny-based cryptography is one of the candidates for post quantum cryptography.
The first isogeny-based cryptosystem was proposed by Couveignes \cite{cou} in 1997.
But his work was not published and posted on ePrint in 2006.
The same result was rediscovered by Rostovtsev and Stolbunov \cite{RS,Stol}.
Their cryptosystem is a key-exchange protocol using isogenies between ordinary elliptic curves.
The isogeny problem on supersingular elliptic curves
first appeared
in hash functions proposed by Charles, Goren, and Lauter \cite{CGL2006}.
In 2011, Jao and De Feo \cite{JF} proposed an isogeny-based exchange protocol
using supersingular elliptic curves, named SIDH (Supersingular Isogeny Diffie-Hellman).
Castryck, Lange, Martindale, Panny, and Renes \cite{CSIDH}
proposed another isogeny-based key-exchange protocol CSIDH (Commutative SIDH).
CSIDH uses an action of an ideal class group on a set of classes of supersingular elliptic curves.

Currently, many researches focus on the protocols using supersingular elliptic curves
due to their efficiency.
Even after optimizations by De Feo, Kieffer, and Smith \cite{FKS},
the protocol using ordinary elliptic curves is much slower than SIDH and CSIDH.
The endomorphism ring of a supersingular elliptic curve is isomorphic to a maximal order of a quaternion algebra.
The isogeny problem is closely related the structure of the endomorphism ring.
Therefore, it is important for crytanalysis to study the endomorphism ring.
Indeed, there are several researches on this topic.
For example, see \cite{kohel1996,kohel:hal-01257092,GPST,EHLMP,CPV2019,love2019supersingular,eisentraeger2020computing}.

In 2019,
Col\`o and Kohel \cite{CK19}
proposed a new isogeny-based key-exchange protocol using
isogenies between supersingular elliptic curves, named OSIDH (Oriented Supersingular Isogeny Diffie-Hellman protocol).
OSIDH uses an inclusion
$$\Order \hookrightarrow \End(E),$$
where $\Order$ is an order of an imaginary quadratic field,
and $E$ is a supersingular elliptic curve over a finite field.
Col\`o and Kohel stated that
the ideal class group $\cl(\Order)$ acts freely and transitively on
a set of equivalence classes of supersingular elliptic curves
which have the above inclusion.
It can be seen as a generalization of the result by Waterhouse \cite{Waterhouse69}.
However, they did not give any proof of it.

In this paper,
we show that their claim has to be slightly modified
and give a proof of a modified theorem.
Furthermore, we show that a method to calculate the group action
proposed by Col\`o and Kohel does not work in some cases,
and give a sufficient condition that the method works.
Under this condition,
we estimate the size of parameters of OSIDH for a certain security level.

    \section{Notation}
Throughout this paper,
we use the following notation.

We fix an algebraic closure of a finite field of characteristic $p$
and denote it by $k$.

We let $K$ be an imaginary quadratic field, $\Order_K$ the ring of integer of $K$,
and $\Order$ an order in $K$.
There exists a unique positive integer $c$ such that
$\Order = \Z + c\Order_K$.
We call the number $c$ the {\it conductor} of $\Order$.
We denote the class group of $\Order$ by $\cl(\Order)$
and the class number of $\Order$ by $h(\Order)$.
For $\alpha \in K$,
we denote the complex conjugate of $\alpha$ by $\bar{\alpha}$.

For an elliptic curve $E$,
we denote the identity element of $E$ by $0_E$,
the $j$-invariant of $E$ by $j(E)$,
and the endomorphism ring of $E$ by $\End(E)$.
We define $\End^0(E) \coloneqq \End(E) \otimes_\Z \Q$.
For an isogeny $\varphi$,
we denote the dual isogeny of $\varphi$ by $\hat{\varphi}$.

For a set $S$, we denote the cardinality of $S$ by $\#S$.
For a group $G$ and $g \in G$,
we denote the subgroup of $G$ generated by $g$ by $\langle g \rangle$.

    \section{Oriented supersingular elliptic curves}
In this section,
we define orientations on elliptic curves over $k$
and prove that an ideal class group acts freely and transitively on
a set of equivalence classes of oriented supersingular elliptic curves.

\subsection{Orientations}
We recall definitions about orientations on elliptic curves,
which are given in \cite{CK19}.

\begin{defi}
    A {\it $K$-orientation} on an elliptic curve $E/k$ is
    a ring homomorphism
    $$\iota: K \hookrightarrow \End^0(E).$$

    A $K$-orientation on $E$ is an {\it $\Order$-orientation}
    if $\iota(\Order) \subseteq \End(E)$.
    An $\Order$-orientation is {\it primitive}
    if $\iota(\Order) = \End(E) \cap \iota(K)$.
    If $\iota$ is a $K$-orientation on $E$
    (resp. primitive $\Order$-orientation),
    a pair $(E, \iota)$ is called
    a {\it $K$-oriented} (resp. {\it primitive} {\it $\Order$-oriented}) {\it elliptic curve}.
\end{defi}

Let $(E, \iota)$ be a $K$-oriented elliptic curve and $\alpha \in K$
such that $\iota(\alpha) \in \End(E)$.
We have $\iota(\bar{\alpha}) = \widehat{\iota(\alpha)}$.
The degree and the trace of $\iota(\alpha)$ are equal to the norm and the trace of $\alpha$, respectively.
In particular, the element $\alpha$ is integral over $\Z$.

\begin{defi}
    Let $(E, \iota)$ be a $K$-oriented elliptic curve
    and $\varphi: E \to F$ an isogeny of degree $\ell$.
    We define a $K$-orientation $\varphi_*(\iota)$ on $F$ by
    \begin{equation*}
        \varphi_*(\iota)(\alpha) = \frac{1}{\ell}\orientdef{\alpha}
        \quad \mbox{for}\ \alpha \in K.
    \end{equation*}
    
    Given two $K$-orientations $(E, \iota_E)$ and $(F, \iota_F)$,
    an isogeny $\varphi: E \to F$ is {\it $K$-oriented}
    if $\varphi_*(\iota_E) = \iota_F$.
    We denote this by
    $\varphi: (E, \iota_E) \to (F, \iota_F)$.
\end{defi}

For a $K$-oriented isogeny $\varphi : (E, \iota_E) \to (F, \iota_F)$,
let $\Order = \End(E) \cap \iota_E(K)$ and $\Order' = \End(F) \cap \iota_F(K)$
so that $\iota_E$ is a primitive $\Order$-orientation
and $\iota_F$ is a primitive $\Order'$-orientation.
We say that $\varphi$ is
{\it horizontal} if $\Order = \Order'$,
{\it ascending} if $\Order \subsetneq \Order'$,
and {\it descending} if $\Order \supsetneq \Order'$.

\begin{defi}
    A $K$-oriented isogeny $\varphi: (E, \iota_E) \to (F, \iota_F)$
    is a {\it $K$-oriented isomorphism}
    if there exists a $K$-oriented isogeny $\psi: (F, \iota_F) \to (E, \iota_E)$
    such that
    $\psi\circ\varphi= \mathrm{id}_E$ and
    $\varphi\circ\psi = \mathrm{id}_F$ as maps.
    If this happens, we say that $(E, \iota_E)$ and $(F, \iota_F)$ are {\it $K$-isomorphic}
    and write $(E, \iota_E) \cong (F, \iota_F)$.
\end{defi}

Note that a $K$-isomorphism $\varphi$ and its inverse $\varphi^{-1}$ are horizontal.

Let $(E, \iota)$ be a $K$-oriented elliptic curve over $k$.
There is the $p$-th power Frobenius map
$\phi_p: E \to E^{(p)}$,
where $E^{(p)}$ is the elliptic curve
obtained from $E$ by raising each coefficients of $E$ to the $p$-th power.
Then we denote $(\phi_p)_*(\iota)$ by $\iota^{(p)}$.
It can be easily checked that
$K$-oriented isogeny
$\phi_p: (E, \iota) \to (E^{(p)}, \iota^{(p)})$
is horizontal.
Furthermore, if $E$ is supersingular then
$(E, \iota)$ is $K$-isomorphic to $((E^{(p)})^{(p)}, (\iota^{(p)})^{(p)})$,
since $E$ is isomorphic to an elliptic curve defined over $\F_{p^2}$
whose endomorphism ring is also defined over $\F_{p^2}$.

We denote the set of primitive $\Order$-oriented supersingular elliptic curves up to $K$-isomorphism
by $\SSpr{\Order}$ as in \cite{CK19}.
We write a $K$-isomorphism class by the same symbol as one of its representatives for brevity.
Note that we can always take a representative of a class in $\SSpr{\Order}$ defined over $\F_{p^2}$.

In \cite{CK19},
it is claimed that $\cl(\Order)$ acts freely and transitively on $\SSpr{\Order}$.
However, rigorously it is not correct.
We should slightly modify their claim.
In the following, we explain this by showing a counter example.

Let $E$ be an elliptic curve over $k$ defined by $y^2 = x^3 + x$.
As is well known, $\End(E)$ contains a subring isomorphic to $\Z[i]$, 
where $i$ is a square root of $-1$ in $\Comp$.
We assume $p \equiv 3 \pmod{4}$. Then $E$ is supersingular.
Let $a$ be a square root of $-1$ in $\F_{p^2}$.
Then there are two orientations
\begin{eqnarray*}
    &&\iota:~ \Q(i) \to \End^0(E),\quad i \mapsto ((x, y) \mapsto (-x, ay)),\\
    &&\iota':~ \Q(i) \to \End^0(E),\quad i \mapsto ((x, y) \mapsto (-x, -ay)),
\end{eqnarray*}
and two primitive $\Z[i]$-oriented elliptic curves $(E, \iota)$ and $(E, \iota')$.
It is easy to show that $(E, \iota)$ and $(E, \iota')$ are not $K$-isomorphic
by checking all the automorphisms of $E$ (there are exactly four automorphisms).
Therefore, there exists at least two
$\Q(i)$-isomorphism classes of primitive $\Z[i]$-oriented supersingular elliptic curves.
On the other hand, the class number of $\Z[i]$ is one,
so the class group of $\Z[i]$ never acts transitively on the set of these classes.

To fix the claim in \cite{CK19},
we consider reductions of elliptic curves over number fields in the next subsection.

    \subsection{Reductions}
Let $L$ be a number field containing $K$ and
$E$ an elliptic curve over $L$ with $\End(E) \cong \Order$.
Let $[\cdotsp]_E: \Order \to \End(E)$ be an isomorphism such that $(E, [\cdotsp]_E)$ is normalized,
i.e., for any invariant differential $\omega$ on $E$,
$$([\alpha]_E)^*\omega = \alpha\omega,\quad \mbox{for all }\alpha \in \Order.$$
(See II.1 in \cite{silverman1994advanced}.)

Let $\ipe$ be a prime ideal of $L$ lying above $p$ at which $E$ has a good reduction.
A pair $(E, [\cdotsp]_E)$ determines a $K$-oriented elliptic curve $(\Emod{E}{\ipe}, \Incmod{\cdotsp}{E}{\ipe})$
by the reduction modulo $\ipe$,
where $\Incmod{\cdotsp}{E}{\ipe} : K \to \End^0(\Emod{E}{\ipe})$ is defined by
$$\Incmod{\alpha}{E}{\ipe} = [\alpha]_E \bmod{\ipe} \mbox{ for all } \alpha \in \Order.$$
For two isomorphic elliptic curves $E$ and $E'$ over $L$ and an isomorphism $\lambda: E \to E'$,
it holds that
$[\cdotsp]_{E'} = \lambda\circ[\cdotsp]_E\circ\lambda^{-1}$.
Therefore, 
the $K$-isomorphism class of the reduction is determined by the isomorphism class of an elliptic curve over a number field.
The following lemma states properties of these reductions.

\begin{lem}\label{lem:Lang}
    Let $E$ be an elliptic curve over a number field $L$ containing $K$
    with $\End(E) \cong \Order$,
    and $\ipe$ a prime ideal of $L$ lying above $p$
    such that $E$ has a good reduction at $\ipe$.
    Then the reduction curve $\Emod{E}{\ipe}$ modulo $\ipe$ is supersingular
    if and only if $p$ does not split in $K$.
    Furthermore, let $c$ be the conductor of $\Order$ and write $c = p^rc_0$, where $p \nmid c_0$.
    Then
    $$\End(\Emod{E}{\ipe}) \cap \Incmod{K}{E}{\ipe} = \Incmod{\Z + c_0\Order_K}{E}{\ipe}.$$
\end{lem}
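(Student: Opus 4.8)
The plan is to treat the two assertions in turn, reducing the computation of the endomorphism ring to a place-by-place analysis inside the quaternion algebra of the reduction. For the first assertion I would invoke Deuring's classical criterion for the reduction of a CM elliptic curve, whose argument I would recall as follows: reduction modulo $\ipe$ embeds $\End(E)\cong\Order$, hence $K\cong\End^0(E)$, into $\End^0(\Emod{E}{\ipe})$, so if $\Emod{E}{\ipe}$ were ordinary then $\End^0(\Emod{E}{\ipe})$ would be an imaginary quadratic field, necessarily $K$ itself, and the action of $\Order$ on the $p$-divisible group of the reduction — which for an ordinary curve is the product of an \'etale and a multiplicative part — would embed $\Order\otimes\Z_p$ as a subring of $\Z_p\times\Z_p$, which is impossible unless $p$ splits in $K$; conversely, when $p$ splits one builds such a splitting along the two primes above $p$ and reads off that the reduction is ordinary. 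From now on assume $p$ does not split, so that $\Emod{E}{\ipe}$ is supersingular; write $B=\End^0(\Emod{E}{\ipe})$, the quaternion algebra over $\Q$ ramified exactly at $p$ and $\infty$, and recall that $\End(\Emod{E}{\ipe})$ is then a \emph{maximal} order of $B$.

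For the second assertion, identify $K$ with its image $\Incmod{K}{E}{\ipe}\subseteq B$. Since $\Incmod{\Order}{E}{\ipe}$ is the reduction of $\End(E)\cong\Order$, it lies in $\End(\Emod{E}{\ipe})$, so $\End(\Emod{E}{\ipe})\cap\Incmod{K}{E}{\ipe}$ is an order of $K$ containing $\Order$, hence of the form $\Z+c'\Order_K$ with $c'\mid c$. An order of $K$ is recovered from its completions, so it is enough to show that $(\End(\Emod{E}{\ipe})\otimes\Z_\ell)\cap(K\otimes\Q_\ell)$ equals $\Order_K\otimes\Z_\ell$ when $\ell=p$ and equals $\Order\otimes\Z_\ell$ when $\ell\neq p$; intersecting these local orders with $K$ then produces exactly $\Z+c_0\Order_K$, because $c=p^rc_0$ has trivial $\ell$-part for every $\ell\neq p$. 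The place $\ell=p$ is immediate: $K\otimes\Q_p$ is a field since $p$ is non-split, $B\otimes\Q_p$ is the quaternion division algebra over $\Q_p$, and its unique maximal order $\End(\Emod{E}{\ipe})\otimes\Z_p$ consists precisely of the elements integral over $\Z_p$; intersecting with $K\otimes\Q_p$ leaves the integral elements of $K\otimes\Q_p$, namely $\Order_K\otimes\Z_p$.

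For $\ell\neq p$, reduction modulo $\ipe$ gives an isomorphism of Tate modules $T_\ell E\xrightarrow{\sim}T_\ell\Emod{E}{\ipe}$ compatible with endomorphisms. As $\Emod{E}{\ipe}$ is supersingular over $k$, $\End(\Emod{E}{\ipe})$ is maximal in $B$, so $\End(\Emod{E}{\ipe})\otimes\Z_\ell$ is a maximal $\Z_\ell$-order of $B\otimes\Q_\ell\cong M_2(\Q_\ell)$; being contained in the maximal order $\End_{\Z_\ell}(T_\ell\Emod{E}{\ipe})$, it equals it. Transporting along the Tate-module isomorphism, $(\End(\Emod{E}{\ipe})\otimes\Z_\ell)\cap(K\otimes\Q_\ell)$ becomes the ring of $x\in K\otimes\Q_\ell$ with $xT_\ell E\subseteq T_\ell E$. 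Finally, $\End(E)\cong\Order$ means that $E$ corresponds over $\Comp$ to a proper fractional $\Order$-ideal, so $T_\ell E$ is free of rank one over $\Order\otimes\Z_\ell$ and its ring of multipliers is $\Order\otimes\Z_\ell$. This settles $\ell\neq p$ and completes the argument.

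The genuinely non-formal inputs are the supersingular reduction criterion used in the first step together with the classical facts that the endomorphism ring of a supersingular curve over $k$ is a maximal order and that reduction is compatible with the prime-to-$p$ Tate modules; I expect these to be the hard part, although all are standard. Once they are granted, the lemma is a routine local computation, the one point that deserves care being the passage from $\End(E)\cong\Order$ to the invertibility of $T_\ell E$ over $\Order\otimes\Z_\ell$, i.e.\ the fact that $E$ corresponds to a \emph{proper} $\Order$-ideal, so that its multiplier ring is $\Order$ rather than a larger order.
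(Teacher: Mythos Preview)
Your argument is correct. The paper does not actually prove this lemma: it refers to Theorem~12 in Chapter~13 of Lang's \emph{Elliptic Functions} for the supersingularity criterion and dismisses the second assertion with the remark that ``the statement about the endomorphism ring can be proved in a similar way to that of ordinary elliptic curves.'' You, by contrast, supply a self-contained proof via a local--global computation inside the maximal order of $B_{p,\infty}$, handling the place $p$ by the uniqueness of the maximal order in the division algebra $B\otimes\Q_p$, and the places $\ell\neq p$ by identifying $\End(\Emod{E}{\ipe})\otimes\Z_\ell$ with $\End_{\Z_\ell}(T_\ell\Emod{E}{\ipe})$ and then transporting across the reduction isomorphism of Tate modules. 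The key fact you isolate---that for an imaginary quadratic order every proper fractional ideal is invertible, hence $T_\ell E$ is free of rank one over $\Order\otimes\Z_\ell$---is exactly the point where the argument would fail for more general CM orders, and you flag it appropriately. Your sketch of the converse direction of the supersingularity criterion (``when $p$ splits one builds such a splitting\ldots'') is a bit terse, but this direction is standard and in any case the paper itself simply cites it. What you gain over the paper's treatment is an actual proof rather than a pointer; what the paper gains is brevity.
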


\begin{proof}
    See Theorem 12 in Chapter 13 in \cite{lang1987elliptic}.
    The statement about the endomorphism ring can be proved in a similar way to that of ordinary elliptic curves.
\end{proof}

This lemma shows that
if $p$ does not split in $K$ and does not divide the conductor of $\Order$,
the reduction $(\Emod{E}{\ipe}, \Incmod{\cdotsp}{E}{\ipe})$
is a primitive $\Order$-oriented supersingular elliptic curve.
From this lemma,
we obtain the following proposition.

\begin{prop}\label{prop:num_of_SS}
    The set $\SSpr{\Order}$ is not empty if and only if
    $p$ does not split in $K$ and does not divide the conductor of $\Order$.
\end{prop}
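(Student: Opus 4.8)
The proposition is an equivalence, and I would prove the two implications by somewhat different means: the existence statement (``if'') as a reduction of a CM elliptic curve via Lemma~\ref{lem:Lang}, and the necessity statement (``only if'') from the structure of the quaternion algebra $\End^0(E_0)$ attached to a supersingular curve.

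For the ``if'' direction, suppose $p$ does not split in $K$ and does not divide the conductor $c$ of $\Order$. Take $E = \Comp/\Order$: by the theory of complex multiplication, $E$ is defined over a number field containing $K$, $\End(E) \cong \Order$, and $E$ has potential good reduction at the primes above $p$. After a finite base change to some number field $L \supseteq K$ we may assume $E$ has good reduction at a prime $\ipe$ of $L$ above $p$. Since $p$ is non-split in $K$ and $c$ is prime to $p$ (so that $c_0 = c$ in the notation of Lemma~\ref{lem:Lang}), Lemma~\ref{lem:Lang} shows that $(\Emod{E}{\ipe}, \Incmod{\cdotsp}{E}{\ipe})$ is a primitive $\Order$-oriented supersingular elliptic curve, so $\SSpr{\Order} \neq \emptyset$.

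For the ``only if'' direction, let $(E_0, \iota) \in \SSpr{\Order}$, and put $B = \End^0(E_0)$ and $\mathcal{M} = \End(E_0)$; then $B$ is the quaternion algebra over $\Q$ ramified exactly at $p$ and $\infty$, and $\mathcal{M}$ is a maximal order of $B$. As $\iota$ embeds the imaginary quadratic field $K$ into $B$, the completion $K \otimes_\Q \Q_p$ must be a field, i.e.\ $p$ does not split in $K$; this is the first assertion. For the second, I would localize at $p$. Here $B \otimes_\Q \Q_p$ is the quaternion division algebra over $\Q_p$, so $\mathcal{M}_p \coloneqq \mathcal{M}\otimes_\Z\Z_p$ is its unique maximal order, the valuation ring $\{x : v_p(\mathrm{Nrd}(x)) \geq 0\}$; a short computation with the reduced norm gives $\mathcal{M}_p \cap \iota(K\otimes_\Q\Q_p) = \iota(\Order_K \otimes_\Z \Z_p)$. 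Writing $c = p^r c_0$ with $p\nmid c_0$ and $\Order' = \Z + c_0\Order_K$, one then checks locally --- using the displayed equality at $p$ and $\iota(\Order)\subseteq\mathcal{M}$ at the remaining primes --- that $\iota(\Order')\subseteq\mathcal{M}$. If $r \geq 1$ then $\Order \subsetneq \Order'$, hence $\mathcal{M}\cap\iota(K) \supseteq \iota(\Order') \supsetneq \iota(\Order)$, contradicting the primitivity of $\iota$; therefore $r = 0$, i.e.\ $p \nmid c$.

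The routine inputs are the facts from complex multiplication used in the second paragraph and the elementary description of the maximal order of a $p$-adic division quaternion algebra used in the third. The step I expect to take the most care is this local analysis at $p$: checking that the valuation ring of $B \otimes_\Q \Q_p$ meets each quadratic subfield in its ring of integers, and assembling the global containment $\iota(\Order') \subseteq \mathcal{M}$ from its localizations. This is exactly what forces primitivity in characteristic $p$ to be incompatible with $p$ dividing the conductor. (A variant closer to the setting of Lemma~\ref{lem:Lang} is to lift $(E_0, \iota(\theta))$, where $\Order = \Z[\theta]$, to characteristic $0$ by Deuring's lifting theorem and apply the conductor formula of Lemma~\ref{lem:Lang} to the lift; I would use whichever is shorter to write out carefully.)
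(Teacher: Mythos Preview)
Your ``if'' direction matches the paper's exactly: construct a CM elliptic curve over a number field, pass to a model with good reduction above $p$, and invoke Lemma~\ref{lem:Lang}.

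Your ``only if'' direction is correct but genuinely different from the paper's. The paper argues entirely via Deuring lifting: given $(F,\iota)\in\SSpr{\Order}$ and $\theta$ with $\Order=\Z[\theta]$, it lifts the pair $(F,\iota(\theta))$ to an elliptic curve $E$ over a number field with an endomorphism of the same degree and trace, concludes $\End(E)\cong\Order$ by injectivity of reduction on endomorphisms, and then reads both conditions (non-split $p$, and $p\nmid c$) directly off Lemma~\ref{lem:Lang}. Your main argument instead stays in characteristic $p$ and exploits the quaternionic structure of $\End^0(E_0)$: the embedding $K\hookrightarrow B_{p,\infty}$ forces $K\otimes\Q_p$ to be a field, and the description of the unique maximal order of the local division algebra as a valuation ring yields $\mathcal{M}_p\cap\iota(K_p)=\iota(\Order_{K,p})$, from which the conductor statement follows by a local-global assembly. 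This avoids Deuring lifting altogether and is arguably more conceptual; the paper's route, by contrast, reuses Lemma~\ref{lem:Lang} symmetrically for both implications and fits better with the reduction framework developed immediately afterwards. Amusingly, the approach you relegate to a parenthetical variant at the end is precisely the one the paper adopts.
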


\begin{proof}
    First, we assume that $p$ does not split in $K$ and does not divide the conductor of $\Order$.
    There exists an elliptic curve $E$ over a number field $L$ with $\End(E) \cong \Order$.
    Since the $j$-invariant of $E$ is an algebraic integer (Theorem II.6 in \cite{silverman1994advanced}),
    $E$ has a potential good reduction at every prime ideal (Proposition VII.5.5 in \cite{silverman2009arithmetic}).
    Therefore, we may assume that $E$ has a good reduction at a prime ideal of $L$ lying above $p$.
    By Lemma \ref{lem:Lang},
    the reduction $(\Emod{E}{\ipe}, \Incmod{\cdotsp}{E}{\ipe})$ modulo such a prime ideal
    is a primitive $\Order$-oriented supersingular elliptic curve.
    Therefore, $\SSpr{\Order}$ is not empty.

    Next, we consider the converse.
    Assume $\SSpr{\Order}$ is not empty and let $(F, \iota) \in \SSpr{\Order}$.
    Let $\theta \in \Order$ such that $\Order = \Z[\theta]$.
    The Deuring lifting theorem (Theorem 14 in Chapter 13 in \cite{lang1987elliptic})
    says that there exist an elliptic curve $E$ over a number field $L$,
    an endomorphism $\alpha \in \End(E)$,
    and a prime ideal $\ipe$ lying above $p$
    such that
    the reduction $\Emod{E}{\ipe}$ modulo $\ipe$ is isomorphic to  $F$ and $\alpha \bmod \ipe$
    corresponds to $\iota(\theta)$ under the isomorphism.
    Since $\alpha$ has the same degree and trace as $\iota(\theta)$,
    $\End(E)$ contains a subset isomorphic to $\Order$.
    Since the reduction map $\End(E) \to \End(F)$ is injective,
    we have $\End(E) \cong \Order$.
    By Lemma \ref{lem:Lang},
    $p$ does not split in $K$ and does not divide the conductor of $\Order$.
\end{proof}

Let $p$ does not split in $K$ and does not divide the conductor of $\Order$.
We denote the set of $j$-invariants of elliptic curves $E$ over $\Comp$ with
$\End(E) \cong \Order$ by $\setj{\Order}$.
Since all elements in $\setj{\Order}$ are algebraic integers,
an elliptic curve whose $j$-invariant is in $\setj{\Order}$
has a potential good reduction at any prime ideal.
Since $\setj{\Order}$ is finite,
we can take a number field $L$ and a prime ideal $\ipe$ of $L$ lying above $p$
such that
for all $j \in \setj{\Order}$,
there exists an elliptic curve over $L$ whose $j$-invariant is $j$
and that has a good reduction at $\ipe$.
Hereafter,
we fix a number field $L$ and a prime ideal $\ipe$ which have these properties,
and an inclusion from the residue field of $L$ modulo $\ipe$ to $k$.
We denote the set of isomorphism classes of elliptic curves $E$ over $L$
such that $j(E) \in \setj{\Order}$ and $E$ has a good reduction at $\ipe$
by $\Ell(\Order)$.
Then we obtain a map defined by the reduction modulo $\ipe$:
$$\redmap{\ipe} : \Ell(\Order) \to \SSpr{\Order},\quad
    E \to (\Emod{E}{\ipe}, \Incmod{\cdotsp}{E}{\ipe}).$$
We show that $\redmap{\ipe}$ is surjective up to the $p$-th power Frobenius map.

\begin{prop}\label{prop:surjection_to_SSpr}
    For all $(F, \iota) \in \SSpr{\Order}$, we have
    $$(F, \iota) \mbox{ or } (F^{(p)}, \iota^{(p)}) \in \redmap{\ipe}(\Ell(\Order)).$$
\end{prop}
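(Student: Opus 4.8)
The plan is to lift $(F,\iota)$ to characteristic zero by the Deuring lifting theorem, to recognize the lift inside $\Ell(\Order)$ through its $j$-invariant, and to absorb all the ambiguities of the lifting into a single Galois conjugation carrying the prime of the lift to the fixed prime $\ipe$; the only residual freedom will then be the $p$-th power Frobenius. First I would apply the Deuring lifting theorem exactly as in the proof of Proposition~\ref{prop:num_of_SS}: writing $\Order=\Z[\theta]$, it produces an elliptic curve $E$ over $\overline{\Q}$ with $\End(E)\cong\Order$ (so $j(E)\in\setj{\Order}$), a prime $\iq$ of $\overline{\Q}$ above $p$ at which $E$ has good reduction, and an endomorphism of $E$ whose reduction modulo $\iq$ corresponds to $\iota(\theta)$ under an isomorphism $\Emod{E}{\iq}\xrightarrow{\sim}F$. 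Let $[\cdotsp]_E\colon\Order\xrightarrow{\sim}\End(E)$ be the normalized isomorphism and $c\in\Gal{K}{\Q}$ the complex conjugation. There are exactly two ring isomorphisms $\Order\to\End(E)$, namely $[\cdotsp]_E$ and $[\cdotsp]_E\circ c$, and the endomorphism above is the image of $\theta$ under one of them; using $\widehat{[\alpha]_E}\bmod\iq=\widehat{[\alpha]_E\bmod\iq}$ and $\widehat{\iota(\alpha)}=\iota(\bar\alpha)$ one sees that, under the isomorphism $\Emod{E}{\iq}\xrightarrow{\sim}F$, the reduction $(\Emod{E}{\iq},\Incmod{\cdotsp}{E}{\iq})$ is $K$-isomorphic to $(F,\iota\circ c^{\varepsilon})$ for a well-defined $\varepsilon\in\{0,1\}$.

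Next I would move the prime. Since $p$ does not split in $K$ there is a unique prime of $K$ above $p$, and its decomposition group in $\Gal{K}{\Q}$ is all of $\Gal{K}{\Q}$; hence there is $\sigma\in\Gal{\overline{\Q}}{\Q}$ with $\sigma(\iq)=\overline{\ipe}$ and $\sigma|_K=c^{\varepsilon}$, where $\overline{\ipe}$ is the prime of $\overline{\Q}$ above $\ipe$ inducing the fixed embedding of the residue field into $k$. Put $E_1=E^{\sigma}$. Then $\End(E_1)\cong\End(E)\cong\Order$, so $j(E_1)=\sigma(j(E))$ again lies in $\setj{\Order}$ (this set being $\Gal{\overline{\Q}}{\Q}$-stable), and $E_1$ has good reduction at $\sigma(\iq)=\overline{\ipe}$. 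By the defining property of $L$ and $\ipe$ there is an elliptic curve $E'$ over $L$ with $j(E')=j(E_1)$ and good reduction at $\ipe$, so $E'\in\Ell(\Order)$; and since $E'$ and $E_1$ are isomorphic over $\overline{\Q}$ by an isomorphism $\lambda$ with $[\cdotsp]_{E_1}=\lambda\circ[\cdotsp]_{E'}\circ\lambda^{-1}$, reduction modulo $\overline{\ipe}$ yields a $K$-isomorphism between $\redmap{\ipe}(E')$ and the reduction of $(E_1,[\cdotsp]_{E_1})$ modulo $\overline{\ipe}$.

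Now I would compute that reduction. Reduction is compatible with conjugation by $\sigma$: the normalized isomorphism $[\cdotsp]_{E^{\sigma}}$ is the composite of $[\cdotsp]_E$ with $\sigma$-conjugation on $\End$ and with $(\sigma|_{\Order})^{-1}$ on $\Order$, so, writing $\widehat{\sigma}$ for the automorphism of $k$ induced by $\sigma$, the reduction of $(E_1,[\cdotsp]_{E_1})$ modulo $\overline{\ipe}=\sigma(\iq)$ is obtained from $(\Emod{E}{\iq},\Incmod{\cdotsp}{E}{\iq})$ by applying $\widehat{\sigma}$ to the curve and its orientation and precomposing the orientation with $\sigma^{-1}|_K=c^{\varepsilon}$. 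Since $\Incmod{\cdotsp}{E}{\iq}\cong\iota\circ c^{\varepsilon}$ and $c^{\varepsilon}\circ c^{\varepsilon}=\mathrm{id}$, this collapses to $\redmap{\ipe}(E')\cong(F^{\widehat{\sigma}},\iota^{\widehat{\sigma}})$, the $K$-oriented curve obtained from $(F,\iota)$ by applying $\widehat{\sigma}$. Finally, as $F$ is supersingular we may take $F$ and $\iota$ defined over $\F_{p^2}$; then $\widehat{\sigma}$ restricted to $\F_{p^2}$ is either the identity, giving $(F^{\widehat{\sigma}},\iota^{\widehat{\sigma}})=(F,\iota)$, or the map $x\mapsto x^p$, giving $F^{\widehat{\sigma}}=F^{(p)}$ with $\iota^{\widehat{\sigma}}$ the Frobenius twist of $\iota$, which equals $(\phi_p)_*(\iota)=\iota^{(p)}$. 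In either case $\redmap{\ipe}(E')$ is $(F,\iota)$ or $(F^{(p)},\iota^{(p)})$, which proves the proposition.

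The step I expect to be the main obstacle is the last one: carrying out precisely the compatibility of reduction with the conjugation $\sigma$ for the \emph{normalized} isomorphism $[\cdotsp]_E$ and for orientations (not merely for $j$-invariants), and identifying the Frobenius twist of $\iota$ with $\iota^{(p)}$ --- this is exactly the place where the phrase ``surjective up to the $p$-th power Frobenius'' is forced on us. The facts already recorded in this section --- that $\phi_p\colon(F,\iota)\to(F^{(p)},\iota^{(p)})$ is horizontal, that $((F^{(p)})^{(p)},(\iota^{(p)})^{(p)})\cong(F,\iota)$, and that $\widehat{\iota(\alpha)}=\iota(\bar\alpha)$ --- are the ingredients that make this bookkeeping close up.
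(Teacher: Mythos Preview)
Your proof is correct and follows essentially the same strategy as the paper's: Deuring lifting, then Galois conjugation to move the lifting prime to $\ipe$ while absorbing the complex-conjugation ambiguity in $[\cdotsp]_E$, with the residual $\Gal{\F_{p^2}}{\F_p}$-ambiguity producing the Frobenius twist. The only organizational difference is that the paper carries this out in two separate steps over finite Galois extensions---first an element of the decomposition group of $\ipe'$ (non-trivial on $K$) to fix the conjugation issue, then an element of $\Gal{M}{K}$ to carry $\iPe'$ to $\iPe$---whereas you work directly over $\overline{\Q}$ and choose a single $\sigma\in\Gal{\overline{\Q}}{\Q}$ with $\sigma(\iq)=\overline{\ipe}$ and $\sigma|_K=c^{\varepsilon}$, which is legitimate precisely because the decomposition group at $p$ surjects onto $\Gal{K}{\Q}$.
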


\begin{proof}
    Let $(F, \iota) \in \SSpr{\Order}$.
    As in the proof of Proposition \ref{prop:num_of_SS},
    there exist a number field $L'$, a prime ideal $\ipe'$ of $L'$,
    and an elliptic curve $E$ over $L'$ with $\End(E) \cong \Order$
    such that $E$ has a good reduction at $\ipe'$ and
    the reduction $\Emod{E}{\ipe'}$ modulo $\ipe'$ is isomorphic to $F$.
    We can assume that $L'$ is a Galois extension over $\Q$.
    Then the induced inclusion $\Incmod{\cdotsp}{E}{\ipe'}$ is equal to $\iota$
    or it holds that $\Incmod{\alpha}{E}{\ipe'} = \iota(\bar{\alpha})$,
    for all $\alpha \in \Order$.

    Assume the latter holds.
    Let $G_{\ipe'}$ be the decomposition group of $\ipe'$, i.e.,
    $$G_{\ipe'} = \set{\sigma \in \Gal{L'}{\Q}}{\ipe'^{\sigma} = \ipe'}.$$
    Since $p$ does not split in $K$,
    there exists $\sigma \in G_{\ipe'}$ such that the restriction of $\sigma$ on $K$ is not trivial.
    Then, for $\alpha \in \Order$, we have $[\alpha]_{E^{\sigma}} = ([\bar{\alpha}]_{E})^{\sigma}$.
    Therefore, the reduction
    $(\Emod{E^{\sigma}}{\ipe'}, \Incmod{\cdotsp}{E^{\sigma}}{\ipe'})$ is $K$-isomorphic to
    $(F, \iota)$ or $(F^{(p)}, \iota^{(p)})$ (it is determined by the reduction of $\sigma$ modulo $\ipe'$).

    Consequently,
    we obtain a number field $L'$, a prime ideal $\ipe'$
    and an elliptic curve $E$ over $L'$ such that the reduction $(\Emod{E}{\ipe'}, \Incmod{\cdotsp}{E}{\ipe'})$
    is $K$-isomorphic to $(F, \iota)$ or $(F^{(p)}, \iota^{(p)})$.

    Let $M$ be a finite Galois extension over $K$ containing $L$ and $L'$.
    Let $\iPe$ and $\iPe'$ be prime ideals of $M$ lying above $\ipe$ and $\ipe'$, respectively.
    Since $K$ has only one prime ideal lying above $p$,
    there exists $\sigma \in \Gal{M}{K}$ such that $\iPe = \iPe'^{\sigma}$.
    Then we have $[\alpha]_{E^{\sigma}} = ([\alpha]_{E})^\sigma$, for all $\alpha \in \Order$.
    Therefore the reduction $(\Emod{E^\sigma}{\iPe}, \Incmod{\cdotsp}{E^{\sigma}}{\iPe})$ modulo $\iPe$
    is $K$-isomorphic to 
    the reduction $(\Emod{E}{\iPe'}, \Incmod{\cdotsp}{E}{\iPe'})$ modulo $\iPe'$
    or its image $(\Emod{E}{\iPe'}^{(p)}, (\Incmod{\cdotsp}{E}{\iPe})^{(p)})$ under the $p$-th power Frobenius map.
    Since $j(E^\sigma) \in \setj{\Order}$,
    there exists an elliptic curve $E'$ over $L$ such that $E' \cong E^\sigma$ and $E'$ has a good reduction at $\ipe$.
    Then we have $\redmap{\ipe}(E') = (F, \iota)$ or $(F^{(p)}, \iota^{(p)})$.
\end{proof}

    \subsection{Group action}\label{sec:group_action}

The first objective in this paper is to prove the following theorem.
This is a slightly modified version of Proposition 3.1 in \cite{CK19},
that is stated without proof.

\begin{thm}\label{thm:group_action}
    Let $K$ be an imaginary quadratic field such that $p$ does not split in $K$,
    and $\Order$ an order in $K$ such that $p$ does not divide the conductor of $\Order$.
    Then the ideal class group $\cl(\Order)$ acts freely and transitively on $\SSprm$.
\end{thm}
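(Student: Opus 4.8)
The plan is to transfer the classical theory of complex multiplication to $\SSprm$ via the reduction map $\redmap{\ipe}$. The action is defined by kernel ideals: each class of $\cl(\Order)$ has a representative that is an invertible integral ideal $\ia$ coprime to $pc$, where $c$ is the conductor of $\Order$; for such $\ia$ and any $(F,\iota)\in\SSpr{\Order}$ one sets $F[\iota(\ia)]=\bigcap_{\alpha\in\ia}\ker\iota(\alpha)$, lets $\varphi_\ia\colon F\to F/F[\iota(\ia)]$ be the quotient isogeny (of degree $\Norm(\ia)$), and puts $\ia\cdot(F,\iota)=(F/F[\iota(\ia)],(\varphi_\ia)_*(\iota))$. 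First I would record, just as in Waterhouse's theorem and classical complex multiplication, that this is a well-defined action of $\cl(\Order)$ on $\SSpr{\Order}$: since $\ia$ is coprime to $c$ the isogeny $\varphi_\ia$ is horizontal, so $(\varphi_\ia)_*(\iota)$ is again a primitive $\Order$-orientation; a $K$-isomorphism $(F,\iota)\cong(F',\iota')$ carries $F[\iota(\ia)]$ onto $F'[\iota'(\ia)]$ and so descends to a $K$-isomorphism of the quotients; principal ideals act trivially, because for $\gamma\in\Order$ the endomorphism $\iota(\gamma)$ induces a $K$-isomorphism $F/F[\iota((\gamma))]\to F$; and multiplicativity follows from functoriality of the pushforward. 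These checks are routine, so the real content is transitivity and freeness on $\SSprm=\redmap{\ipe}(\Ell(\Order))$.

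For transitivity, the main point is that $\redmap{\ipe}$ intertwines the two actions. If $E\in\Ell(\Order)$ and $\ia$ is invertible and coprime to $p$, then $\ia\ast E:=E/E[\ia]$ again lies in $\Ell(\Order)$ — good reduction at $\ipe$ is preserved by isogeny, and $j(\ia\ast E)\in\setj{\Order}$ by classical complex multiplication — and $\redmap{\ipe}(\ia\ast E)=\ia\cdot\redmap{\ipe}(E)$. Indeed $\Norm(\ia)$ is prime to $p$, so $E[\ia]$ reduces isomorphically onto the corresponding subgroup of $\Emod{E}{\ipe}$, whence the reduction of $E/E[\ia]$ is the quotient of $\Emod{E}{\ipe}$ by that subgroup; and the normalized orientation of $\ia\ast E$, being the pushforward of that of $E$ along $\varphi_\ia$, reduces to the pushforward of the reduced orientation. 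Thus $\SSprm$ is a union of $\cl(\Order)$-orbits. Now take $(F_i,\iota_i)=\redmap{\ipe}(E_i)\in\SSprm$ with $E_i\in\Ell(\Order)$. Since $\cl(\Order)$ acts transitively on $\setj{\Order}$, there is an invertible ideal $\ia$, which we may take coprime to $p$, with $j(\ia\ast E_1)=j(E_2)$; then $\ia\ast E_1$ and $E_2$ are isomorphic over $\overline{L}$, and since the normalized orientation of a CM elliptic curve is unique, any such isomorphism carries the normalized orientation of one to that of the other. Reducing it modulo a prime of $\overline{L}$ above $\ipe$ produces a $K$-isomorphism $\redmap{\ipe}(\ia\ast E_1)\cong\redmap{\ipe}(E_2)$, and with the equivariance this gives $\ia\cdot(F_1,\iota_1)=(F_2,\iota_2)$.

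For freeness I would work on all of $\SSpr{\Order}$. Suppose an invertible ideal $\ia$ coprime to $pc$ fixes $(F,\iota)$, with $\lambda\colon F/F[\iota(\ia)]\to F$ a $K$-isomorphism realizing this. Then $\beta:=\lambda\circ\varphi_\ia$ is a $K$-oriented endomorphism of $(F,\iota)$ of degree $\Norm(\ia)$, and $\beta_*(\iota)=\iota$ forces $\beta\iota(\alpha)=\iota(\alpha)\beta$ for all $\alpha\in K$. Hence $\beta$ lies in the centralizer of $\iota(K)$ in the quaternion algebra $\End^0(F)$, which equals $\iota(K)$ since $\iota(K)$ is a maximal subfield; so $\beta\in\iota(K)\cap\End(F)=\iota(\Order)$, say $\beta=\iota(\gamma)$ with $\Norm(\gamma)=\Norm(\ia)$. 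As $\lambda$ is an isomorphism, $F[\iota(\ia)]=\ker\iota(\gamma)$. Finally, every prime $\ell\mid\Norm(\ia)$ satisfies $\ell\neq p$ and $\ell\nmid c$, so $T_\ell F$ is free of rank one over the maximal order $\Order\otimes\Z_\ell$; hence $F[\iota(\ia)]\cong\Order/\ia$ as an $\Order$-module, so its annihilator in $\Order$ is exactly $\ia$, while the annihilator of $\ker\iota(\gamma)$ contains $(\gamma)$. Thus $(\gamma)\subseteq\ia$, and comparing norms gives $\ia=(\gamma)$, so $\ia$ is principal and the action is free.

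I expect the chief obstacle to be making the reduction step of the transitivity argument fully precise — that reducing modulo $\ipe$ really does intertwine the kernel-ideal construction, with the normalized orientations matching up — since this is exactly where the theorem departs from the assertion of Col\`o and Kohel: one can lift to characteristic zero and invoke classical complex multiplication only on the image $\redmap{\ipe}(\Ell(\Order))$ and not on all of $\SSpr{\Order}$, as the counterexample with $\Z[i]$ shows. The identification of the centralizer of $\iota(K)$ and the rank-one freeness of the prime-to-$p$ Tate module are the other technical ingredients one must nail down.
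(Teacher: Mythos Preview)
Your argument is correct and close in spirit to the paper's, but the two proofs are organized differently and each has a distinctive step. For transitivity, you transfer the classical transitivity of $\cl(\Order)$ on $\Ell(\Order)$ (equivalently on $\setj{\Order}$) through the equivariance of $\redmap{\ipe}$. The paper instead first proves freeness and then finishes transitivity by a short counting argument: by definition $\#\SSprm\le\#\Ell(\Order)=h(\Order)$, while freeness forces $\#\SSprm\ge h(\Order)$, so the action has a single orbit. Your route is more constructive (it actually exhibits the ideal carrying one point to another), while the paper's is slicker once freeness is in hand. For freeness, both arguments reach an endomorphism $\iota(\alpha)$ with $\ker\iota(\alpha)=E[\ia]$ via the centralizer of $\iota(K)$ in $\End^0(E)$; the paper then asserts $\alpha\Order=\ia$ directly from separability and $\ia$ being prime to $p$, whereas you justify this step in more detail using the rank-one freeness of $T_\ell E$ over $\Order\otimes\Z_\ell$ (for $\ell\nmid pc$) to identify $E[\ia]\cong\Order/\ia$ and compare annihilators and norms. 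Your extra hypothesis that $\ia$ be coprime to the conductor is harmless (every class has such a representative) and is exactly what makes the Tate-module step clean.
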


Before we prove this theorem, we define an action of $\cl(\Order)$ on $\SSprm$.
For $(E, \iota) \in \SSpr{\Order}$ and an integral ideal $\ia$ of $\Order$ prime to $p$,
we define the $\ia$-torsion subgroup by
$$E[\ia] = \bigcap_{\alpha \in \ia} \ker\iota(\alpha).$$
Then there are an elliptic curve $F$ and a separable isogeny $\varphi : E \to F$
with $\ker\varphi = E[\ia]$
(Proposition III.4.12 in \cite{silverman2009arithmetic}).
Furthermore,
if there are an elliptic curve $F'$ and a separable isogeny $\varphi' : E \to F'$
with $\ker\varphi' = E[\ia]$,
there exists an isomorphism $\lambda : F \to F'$ such that
$\varphi' = \lambda\circ\varphi$
(Corollary III.4.11 in \cite{silverman2009arithmetic}).
Therefore, we have $(F, \varphi_*(\iota))$ is $K$-isomorphic to $(F', \varphi'_*(\iota))$.
We denote the $K$-isomorphism class of $(F, \varphi_*(\iota))$ by $\ia * (E, \iota)$.
Then we have the following proposition.

\begin{prop}\label{prop:horizontal}
    Let $(E, \iota)$ be a primitive $\Order$-oriented elliptic curve,
    $\ia$ an integral ideal of $\Order$ prime to $p$.
    Then a $K$-oriented isogeny with kernel $E[\ia]$
    $$\varphi : (E, \iota) \to \ia * (E, \iota)$$
    is horizontal or ascending.
    Furthermore, if $\ia$ is invertible then $\varphi$ is horizontal.
\end{prop}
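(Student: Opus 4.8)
The plan is to analyze the behavior of the orientation $\varphi_*(\iota)$ on $F = \ia * (E, \iota)$ by tracking which elements of $K$ map into $\End(F)$ under the pushed-forward orientation. Write $\Order' = \End(F) \cap \varphi_*(\iota)(K)$; since $\varphi_*(\iota)$ is a primitive $\Order'$-orientation, the claim that $\varphi$ is horizontal or ascending amounts to showing $\Order \subseteq \Order'$, and horizontality when $\ia$ is invertible amounts to $\Order = \Order'$. The first step is to verify $\Order \subseteq \Order'$: for $\alpha \in \Order$, we must check $\varphi_*(\iota)(\alpha) \in \End(F)$, i.e. that $\varphi \circ \iota(\alpha) \circ \hat{\varphi}$ is divisible by $\ell = \deg\varphi$ as an endomorphism of $F$. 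The natural way is to show $\iota(\alpha)$ maps $\ker\hat\varphi$ into $\ker\varphi = E[\ia]$ — equivalently, that $\iota(\alpha)$ preserves $E[\ia]$ — so that $\iota(\alpha)\circ\hat\varphi$ factors through $\varphi$. But $E[\ia] = \bigcap_{\beta\in\ia}\ker\iota(\beta)$ is manifestly stable under $\iota(\alpha)$ because $\iota(\Order)$ is commutative: if $\iota(\beta)(P) = 0$ for all $\beta\in\ia$, then $\iota(\beta)(\iota(\alpha)P) = \iota(\alpha)(\iota(\beta)P) = 0$. This gives the factorization and hence $\Order\subseteq\Order'$.

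Next I would handle the invertible case. When $\ia$ is invertible, the cleanest approach is to produce the ``return'' isogeny and show it too is norm-compatible. Specifically, consider $\bar\ia$ (the conjugate ideal) and the composite $E[\ia\bar\ia] = E[(\Norm\ia)]$; since $\ia$ is invertible, $\ia\bar\ia = (\Norm\ia)\Order$, so $E[\ia\bar\ia] = E[\Norm\ia] = \ker[\Norm\ia]_E = \ker(\hat\varphi\circ\varphi)$ after identifying the isogeny with kernel $E[\ia\bar\ia]$ with multiplication by $\Norm\ia$. One then checks that the isogeny $\psi: F \to \ia\bar\ia * (E,\iota) = (E,\iota)$ with kernel $F[\bar\ia]$ satisfies $\psi\circ\varphi = [\Norm\ia]_E$ up to isomorphism, and symmetrically $\varphi\circ\psi = [\Norm\ia]_F$. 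Applying the already-established containment to $\bar\ia$ acting on $(F,\varphi_*(\iota))$ gives $\Order' \subseteq \End(\ia\bar\ia*(E,\iota))\cap(\cdots) $, and chasing this back through $\psi$ — using that $\psi_*(\varphi_*(\iota)) = \iota$ and that conjugation by an isogeny of degree $\Norm\ia$ prime to $p$ does not change the intersection with $\iota(K)$ — yields $\Order'\subseteq\Order$, hence equality. The key identity to nail down is $\varphi_*(\iota)(\Order') \subseteq \End(F)$ pulling back to $\iota(\Order)$, i.e. that no new endomorphisms are created, which follows because any $\gamma \in \Order'$ with $\varphi_*(\iota)(\gamma)\in\End(F)$ gives $\psi \circ \varphi_*(\iota)(\gamma) \circ \hat\psi / \Norm\ia = \iota(\gamma)$, forcing $\iota(\gamma) \in \End(E)$ since $\Norm\ia$ is invertible in $\Z$ locally away from the relevant torsion — more carefully, $[\Norm\ia]_E \circ \iota(\gamma) = \psi\circ\varphi_*(\iota)(\gamma)\circ\hat\psi \in \End(E)$ and $\iota(\gamma)$ is an endomorphism up to $\Norm\ia$-torsion, which by primitivity of $\iota$ and $p\nmid\Norm\ia$ forces $\gamma\in\Order$.

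I expect the main obstacle to be the invertible-ideal case, specifically establishing cleanly that $\psi \circ \varphi = [\Norm\ia]_E$ (as opposed to merely $[\Norm\ia]_E$ composed with an automorphism) and tracking the orientations through this composition so that the pushed-forward orientation genuinely returns to $\iota$ rather than to $\iota$ composed with complex conjugation. This requires being careful with the normalization in the definition of $\varphi_*$ and with the fact that $\ker(\psi\circ\varphi) = E[\ia\bar\ia] = E[\Norm\ia]$, which uses invertibility of $\ia$ in an essential way — for non-invertible $\ia$ one only gets $\ia\bar\ia \subsetneq (\Norm\ia)\Order$ and the composite can be a proper ascending isogeny, which is exactly why the first part of the proposition only claims ``horizontal or ascending.'' A secondary subtlety is confirming, in the general (possibly non-invertible) case, that $\varphi$ is never descending: this again follows from the $\iota(\Order)$-stability of $E[\ia]$ established in the first step, since descending would mean $\Order\not\subseteq\Order'$, contradicting the factorization argument.
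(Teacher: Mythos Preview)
Your proposal is correct, and for the second (invertible) part it runs along the same lines as the paper: both arguments identify the dual isogeny $\hat\varphi$ with the isogeny whose kernel is $F[\bar\ia\Order']$ (the paper verifies $\ker\hat\varphi = E'[a\ia^{-1}\Order']$ directly on generators, you get there via $\ker(\psi\circ\varphi)=E[\ia\bar\ia]=E[\Norm\ia]$), and then apply the first assertion to $\hat\varphi$ to obtain $\Order'\subseteq\Order$.

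For the first part your route is genuinely different and somewhat cleaner. The paper reduces to the case where $E[\ia]$ is cyclic, chooses explicit generators $P,Q$ of $E[a]$ with $P$ generating $E[\ia]$, and checks by hand that $\varphi\circ\iota(x)\circ\hat\varphi$ kills an explicit basis of $E'[a]$. Your observation that $E[\ia]=\bigcap_{\beta\in\ia}\ker\iota(\beta)$ is automatically $\iota(\Order)$-stable by commutativity, hence $\varphi\circ\iota(\alpha)$ factors as $\psi\circ\varphi$ for some $\psi\in\End(F)$, gives $\varphi\circ\iota(\alpha)\circ\hat\varphi=\psi\circ[\deg\varphi]$ immediately, with no reduction to the cyclic case and no coordinate work. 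This buys a shorter, more conceptual argument; the paper's explicit basis computation buys nothing extra here, though the same generators are reused in its proof of $\ker\hat\varphi=E'[a\ia^{-1}\Order']$.

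Two small points of phrasing to clean up: the sentence ``$\iota(\alpha)$ maps $\ker\hat\varphi$ into $\ker\varphi$'' is not literally sensible since $\ker\hat\varphi\subseteq F$ while $\iota(\alpha)\in\End(E)$; the correct version is your parenthetical ``$\iota(\alpha)$ preserves $E[\ia]$,'' from which $\varphi\circ\iota(\alpha)=\psi\circ\varphi$ follows. Likewise ``$\iota(\alpha)\circ\hat\varphi$ factors through $\varphi$'' should read ``$\varphi\circ\iota(\alpha)$ factors through $\varphi$.'' Also, when you invoke the first part for $\psi$ in the invertible case, make explicit that $F[\bar\ia]=F[\bar\ia\Order']$ (which holds because $\iota'(\Order')\subseteq\End(F)$), so that $\ker\psi$ is genuinely the torsion by an ideal of $\Order'$ and the first assertion applies to the primitive $\Order'$-oriented curve $(F,\iota')$ to yield $\Order'\subseteq\Order$.
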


\begin{proof}
    Let $(E', \iota') = \ia * (E, \iota)$ and $\Order' = \End(E') \cap \iota'(K)$.

    If $\ia$ is divisible by some integer $n$,
    we have $\varphi = \varphi' \circ \iota(n)$, where $\varphi'$ is an isogeny with kernel $E[\ia/n]$.
    Since the multiplication by $n$ in $\End(E)$ is horizontal,  
    we can assume that $\ia$ is not divisible by any integer greater than 1,
    i.e., $E[\ia]$ is cyclic.

    Let $a$ be the absolute norm of $\ia$ and $x \in \Order$.
    By definition,
    \begin{equation*}
        \iota'(x) = \frac{1}{a}\orientdef{x}.
    \end{equation*}
    Therefore,
    $\Order \subseteq \Order'$
    if and only if $\orientdef{x} \in a\End(E')$ for all $x \in \Order$.
    Let $\{P, Q\}$ be a pair of points in $E$ generating $E[a]$ such that $P$ generates $E[\ia]$,
    and $P'$ a point in $E$ such that $aP' = P$.
    Then it can be easily checked that $\varphi(P')$ and $\varphi(Q)$ generate $E'[a]$.
    We have
    \begin{eqnarray*}
        \orientdef{x}(\varphi(P')) &=& \varphi\circ\iota(x)(P) = 0_{E'}\quad (\because \iota(x)(P) \in E[\ia].),\\
        \orientdef{x}(\varphi(Q)) &=& \varphi\circ\iota(x)(0_E) = 0_{E'}.
    \end{eqnarray*}
    Therefore, $E'[a] \subseteq \ker(\orientdef{x})$.
    This means $\orientdef{x} \in a\End(E')$, i.e., $\iota'(x) \in \End(E')$.
    So, we conclude that $\Order \subseteq \Order'$.

    Assume that $\ia$ is an invertible ideal of $\Order$.
    Then $a\ia^{-1}$ is an integral ideal of $\Order$.
    We show $\ker(\hat{\varphi}) = E'[a\ia^{-1}\Order']$.
    From this and the first assertion, we have $\Order' = \Order$.

    We note that $\ker(\hat{\varphi})$ is generated by $\varphi(Q)$.
    Let $x \in \ia^{-1}$.
    For $\alpha \in \ia$, we have
    $$\iota(\alpha ax)Q = \iota(\alpha x)aQ = 0_E.$$
    Therefore, $\iota(ax)Q \in E[\ia] = \ker(\varphi)$.
    So we have $\iota'(ax)\varphi(Q) = \varphi(\iota(ax)Q) = 0_{E'}$.
    This means $\ker(\hat{\varphi}) \subseteq E'[a\ia^{-1}\Order']$.

    Conversely, let $R \in E'[a\ia^{-1}\Order']$.
    Since a non-constant isogeny is surjective,
    there exists $S \in E$ such that $R = \varphi(S)$.
    For $x \in \ia^{-1}$, we have
    $$\varphi(\iota(ax)S) = \iota'(ax)\varphi(S) = \iota'(ax)R = 0_{E'}.$$
    Let $\alpha_1, \dots, \alpha_n \in \ia$ and
    $x_1, \dots, x_n \in \ia^{-1}$ such that
    $\sum_i \alpha_i x_i = 1$.
    Then
    \begin{equation*}
        aS = \iota(\sum_i \alpha_i x_i)aS = \sum_i \iota(\alpha_i)\iota(ax_i)S = 0_E.
    \end{equation*}
    Therefore, we have $\hat{\varphi}(R) = \hat{\varphi}\circ\varphi(S) = aS = 0_E$,
    i.e., $R \in \ker(\hat{\varphi})$.
    So we obtain $E'[a\ia^{-1}\Order'] \subseteq \ker(\hat{\varphi})$.
\end{proof}

Now, we can define an action of $\cl(\Order)$ on $\SSprm$.

\begin{prop}\label{prop:action}
    For $(E, \iota) \in \SSprm$ and an invertible integral ideal $\ia$ of $\Order$ prime to $p$,
    the map
    $(\ia, (E, \iota)) \mapsto \ia * (E, \iota)$
    defines an action of $\cl(\Order)$ on $\SSprm$.
\end{prop}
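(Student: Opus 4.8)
The plan is to verify the two axioms of a group action: that the identity class acts trivially, and that the operation is compatible with multiplication of ideals, i.e. $\ia * (\ib * (E,\iota)) \cong (\ia\ib) * (E,\iota)$. Before that, one must check the map is well-defined on class-group elements, namely that $\ia * (E,\iota)$ depends only on the class of $\ia$ in $\cl(\Order)$, and that the result again lies in $\SSprm$ (not merely in $\SSpr{\Order}$). The latter point is where Proposition \ref{prop:horizontal} does the work: since $\ia$ is invertible, the isogeny $\varphi : (E,\iota) \to \ia * (E,\iota)$ is \emph{horizontal}, so $\End(E') \cap \iota'(K) = \Order$, which means $\iota'$ is again a \emph{primitive} $\Order$-orientation; and since a separable isogeny of a supersingular curve has supersingular target, $\ia * (E,\iota) \in \SSpr{\Order}$. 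Combined with the fact, already noted in the excerpt, that every class in $\SSpr{\Order}$ has a representative over $\F_{p^2}$, we stay inside $\SSprm$.

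First I would establish well-definedness with respect to the ideal class. Suppose $\ia$ and $\ib$ are invertible integral ideals prime to $p$ with $[\ia] = [\ib]$ in $\cl(\Order)$, so $\ia = \gamma \ib$ for some $\gamma \in K^\times$; after scaling we may take $\gamma \in \Order$ with $\gamma\Order$ prime to $p$, so that $\ia$ and $\ib$ differ by a principal ideal $(\gamma)$. I claim $E[(\gamma)] = \ker \iota(\gamma)$ and that the isogeny with this kernel, namely $\iota(\gamma) \in \End(E)$ itself, satisfies $\iota(\gamma)_*(\iota) = \iota$ (it is a horizontal \emph{endomorphism}, and one checks directly from the pushforward formula that $\frac{1}{\deg\iota(\gamma)}\,\iota(\gamma)\iota(\alpha)\widehat{\iota(\gamma)} = \frac{1}{N(\gamma)}\iota(\gamma\alpha\bar\gamma) = \iota(\alpha)$). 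Hence composing $\varphi_\ib : E \to \ib*(E,\iota)$ with (the curve-level realization of) multiplication-by-$\iota(\gamma)$ gives an isogeny with kernel $E[\ia]$ landing on a curve $K$-isomorphic to $\ib * (E,\iota)$; by the uniqueness clause used to define the $*$-operation, $\ia * (E,\iota) \cong \ib * (E,\iota)$.

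Next I would check compatibility with composition: $(\ia\ib) * (E,\iota) \cong \ia * (\ib * (E,\iota))$. The natural approach is to show that if $\varphi_\ib : (E,\iota) \to (E',\iota')$ has kernel $E[\ib]$ and $\psi : (E',\iota') \to (E'',\iota'')$ has kernel $E'[\ia\Order']$, then $\psi \circ \varphi_\ib$ has kernel exactly $E[\ia\ib]$. The inclusion $E[\ia\ib] \subseteq \ker(\psi\circ\varphi_\ib)$ is immediate from the definitions since $\iota(\alpha\beta)$ with $\alpha \in \ia, \beta \in \ib$ kills any point of $E[\ia\ib]$ in two stages; for the reverse inclusion one compares degrees, using that $\deg\varphi_\ib = N(\ib)$, $\deg\psi = N(\ia)$ (both because $\ia,\ib$ invertible and prime to $p$, so the torsion subgroups have the expected orders), and $\#E[\ia\ib] = N(\ia\ib)$. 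Then $\ia\ib * (E,\iota)$ and $\ia * (\ib * (E,\iota))$ are targets of isogenies from $E$ with the same kernel, hence $K$-isomorphic by the same uniqueness argument. Finally, the unit ideal $(1) = \Order$ gives $E[\Order] = \{0_E\}$, so the identity isogeny represents $\Order * (E,\iota)$, and $\Order * (E,\iota) \cong (E,\iota)$.

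The main obstacle is the degree bookkeeping that pins down $\ker(\psi \circ \varphi_\ib) = E[\ia\ib]$ exactly, and relatedly confirming $\ker\psi = E'[\ia\Order']$ is the pushed-forward torsion $\varphi_\ib(E[\ia\ib])$; one wants that $\varphi_\ib$ induces an isomorphism $E[\ia\ib]/E[\ib] \xrightarrow{\sim} E'[\ia\Order']$, which relies on $\ia,\ib$ being invertible and prime to each other's "bad" primes and to $p$. Handling the case where $\ia$ or $\ib$ fails to be coprime to the other (so one should first replace them by coprime representatives within their classes, legitimate by the well-definedness step) is the one place care is needed; everything else is formal manipulation with the pushforward formula and the uniqueness of the isogeny with prescribed kernel.
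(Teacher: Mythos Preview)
Your argument has a genuine gap in the step where you claim $\ia * (E,\iota)$ lands back in $\SSprm$. You correctly use Proposition~\ref{prop:horizontal} to conclude that the target is a primitive $\Order$-oriented supersingular curve, i.e.\ lies in $\SSpr{\Order}$. But $\SSprm$ is by definition the \emph{image of the reduction map} $\rho:\Ell(\Order)\to\SSpr{\Order}$, and Proposition~\ref{prop:surjection_to_SSpr} only tells us that every class in $\SSpr{\Order}$ is either in $\SSprm$ or is the Frobenius twist of something in $\SSprm$; in general $\SSprm\subsetneq\SSpr{\Order}$. Having a representative over $\F_{p^2}$ is \emph{not} the same as being in the image of $\rho$---every class in $\SSpr{\Order}$ has such a representative, so that observation buys you nothing here.

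The paper closes this gap by lifting to characteristic zero: given $(E,\iota)\in\SSprm$, choose $F\in\Ell(\Order)$ with $\rho(F)=(E,\iota)$; since $\ia$ is prime to $p$, the subgroup $F[\ia]$ reduces to $E[\ia]$; CM theory over number fields produces $F'\in\Ell(\Order)$ and an isogeny $F\to F'$ with kernel $F[\ia]$; reducing, one gets $\ia*(E,\iota)=\rho(F')\in\SSprm$. This lifting step is precisely what is missing from your sketch, and there is no obvious way to replace it by a purely characteristic-$p$ argument using only horizontality. The remainder of your proposal (principal ideals act trivially, and the kernel of the composite is $E[\ia\ib]$) is fine; the paper handles the latter by citing Waterhouse's Proposition~3.12 rather than the degree count you outline, but these amount to the same thing.
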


\begin{proof}
    Let $(E, \iota) \in \SSprm$ and
    $F \in \Ell(\Order)$ such that $(\Emod{F}{\ipe}, \Incmod{\cdotsp}{F}{\ipe}) = (E, \iota)$.
    Then $F[\ia] \coloneqq \cap_{\alpha \in \ia} \ker[\alpha]_F$ 
    corresponds to $E[\ia]$ by the reduction modulo $\ipe$, since $\ia$ is prime to $p$.
    By the complex multiplication theory for elliptic curves over number fields,
    there exist an elliptic curve $F' \in \Ell(\Order)$
    and an isogeny $\varphi: F \to F'$ with $\ker\varphi = F[\ia]$.
    Let $\tilde{\varphi}: \Emod{F}{\ipe} \to \Emod{F'}{\ipe}$ be the reduction of $\varphi$ modulo $\ipe$.
    Then we have 
    $$\ia*(E, \iota) = (\Emod{F'}{\ipe}, \tilde{\varphi}_*(\iota)) = \redmap{\ipe}(F') \in \SSprm.$$

    Let $(E, \iota) \in \SSprm$ and $\ia, \ib$ invertible integral ideals of $\Order$ prime to $p$.
    Write $(E', \iota') = \ia * (E, \iota)$ and $(E'', \iota'') = \ib * (E', \iota')$,
    and let $\varphi_{\ia} : E \to E'$ be a separable isogeny with $\ker\varphi_{\ia} = E[\ia]$
    and $\varphi_{\ib}: E' \to E''$ a separable isogeny with $\ker\varphi_{\ib} = E[\ib]$.
    Then Proposition 3.12 in \cite{Waterhouse69} says
    $\varphi_{\ib}\circ\varphi_{\ia}$ has kernel $E[\ib\ia]$.
    We have
    \begin{eqnarray*}
        \iota'' &=& \frac{1}{\deg\varphi_{\ib}}\varphi_{\ib}\circ\iota'\circ\hat{\varphi_{\ib}}\\
                &=& \frac{1}{\deg\varphi_{\ib}\deg\varphi_{\ia}}\varphi_{\ib}\circ\varphi_{\ia}\circ\iota\circ\widehat{\varphi_{\ia}}\circ\widehat{\varphi_{\ib}}\\
                &=& (\varphi_{\ib}\circ\varphi_{\ia})_*(\iota).
    \end{eqnarray*}
    Therefore, we have
    $$\ib * (\ia * (E, \iota)) = (\ib\ia) * (E, \iota).$$

    It is easy to show that any principal ideal of $\Order$ acts trivially on $\SSprm$.
    Therefore, 
    the map $(\ia, (E, \iota)) \to \ia * (E, \iota)$
    defines an action of $\cl(\Order)$ on $\SSprm$.
\end{proof}

Now, we prove Theorem \ref{thm:group_action}.

\begin{proof}[Proof of Theorem \ref{thm:group_action}]
    It remains to show the action in Proposition \ref{prop:action} is free and transitive.

    Let $\ia$ be an invertible integral ideal of $\Order$ prime to $p$
    such that
    $\ia * (E, \iota) = (E, \iota)$.
    This means that there exists a separable endomorphism $\varphi$ of $E$ with $\ker\varphi = E[\ia]$
    such that
    $\iota = \varphi_*(\iota)$.
    Then $\varphi$ commutes with the endomorphisms in the image of $\iota$.
    Therefore, $\varphi \in \iota(\Order)$.
    Let $\alpha \in \Order$ such that $\varphi = \iota(\alpha)$.
    Since $\varphi$ is separable and $\ia$ is prime to $p$,
    it holds that $\alpha\Order = \ia$.
    Therefore, the action of $\cl(\Order)$ on $\SSprm$ is free.

    By Proposition \ref{prop:num_of_SS} and the assumption on $K$ and $\Order$,
    the set $\SSprm$ is not empty.
    By the definition,
    $\#\SSprm \leq \#\Ell(\Order) = h(\Order)$.
    On the other hand,
    since $\cl(\Order)$ acts freely on $\SSprm$,
    we have $\#\SSprm \geq h(\Order)$.
    Therefore,
    $\#\SSprm = h(\Order)$.
    This shows that the action is transitive.
\end{proof}

\begin{rem}
    CSIDH \cite{CSIDH} uses the set of $\F_p$-isomorphism classes of elliptic curves
    over $\F_p$ whose $\F_p$-endomorphism rings are isomorphic to $\Z[\sqrt{-p}]$.
    Public keys and secret shares in CSIDH is calculated by using
    the free and transitive action of $\cl(\Z[\sqrt{-p}])$ on this set.
    As mentioned in \cite{CK19},
    the group action discussed in this section can be regarded as a generalization
    of the group action in CSIDH.
    We explain this in the following.

    We can define $\Z[\sqrt{-p}]$-orientation on a supersingular elliptic curve over $\F_p$
    by taking $\sqrt{-p}$ to the $p$-th power Frobenius endomorphism.
    Under this orientation,
    an isogeny is $\Z[\sqrt{-p}]$-orientated if and only if it is defined over $\F_p$,
    and two $\Z[\sqrt{-p}]$-orientated elliptic curves are $\Q[\sqrt{-p}]$-isomorphic
    if and only if these curves are $\F_p$-isomorphic.
    Therefore, the set of classes of elliptic curves used in CSIDH is equal to $\SSpr{\Z[\sqrt{-p}]}$,
    and the group action in CSIDH is a special case of Theorem \ref{thm:group_action}.
    Note that, in this case, we have $\SSprm[\Z[\sqrt{-p}]] = \SSpr{\Z[\sqrt{-p}]}$.
\end{rem}
    \section{Orienting supersingular isogeny graphs}\label{sec:graph}
In this section,
we consider a graph related to oriented supersingular elliptic curves.

First, we define an equivalence relation on isogenies.
\begin{defi}
    Two $K$-oriented isogenies
    $$\varphi: (E, \iota_E) \to (F, \iota_F)
    \mbox{ and }
    \psi: (E', \iota_{E'}) \to (F', \iota_{F'})$$
    are {\it $K$-equivalent} if there exist $K$-oriented isomorphisms
    $\lambda : (E, \iota_{E})  \to (E', \iota_{E'})$ and
    $\lambda' : (F', \iota_{F'}) \to (F, \iota_F)$ such that $\lambda'\circ\psi\circ\lambda = \varphi$.
\end{defi}

Let $\ell \neq p$ be a prime number,
and $\Order_\ell^{(0)}$ an order in $K$ such that $\ell$ does not divide the conductor of $\Order_0$.
An {\it $\ell$-isogeny} is an isogeny of degree $\ell$.
We define
$$\Orderl{n}{\ell} \coloneqq \Z + \ell^n\Order_\ell^{(0)},\quad n \in \Z_{\geq 0}.$$

We define a {\it $K$-orienting supersingular $\ell$-isogeny graph}
$\Graph$
as follows:
The vertex set of $\Graph$ is $\bigcup_{n \geq 0}\SSprm[\Orderl{n}{\ell}]$,
and the edges of $\Graph$ are $K$-oriented $\ell$-isogenies up to $K$-equivalence.

Since the reduction map $\redmap{\ipe}$ is bijective into its image,
and an $\ell$-isogeny between $K$-oriented supersingular elliptic curves
corresponds to an $\ell$-isogeny between elliptic curves over a number field,
$\Graph$ has the same structure as that of the $\ell$-isogeny graph of
elliptic curves over a number field with complex multiplication by orders of $K$.
In particular, every $\ell$-isogeny from $\bigcup_{n \geq 0}\SSprm[\Orderl{n}{\ell}]$
has codomain in $\bigcup_{n \geq 0}\SSprm[\Orderl{n}{\ell}]$.
Moreover,
the following analogue of Proposition 23 in \cite{kohel1996} can be obtained by
the graph structure of elliptic curves over a number field with complex multiplication.
\begin{prop}\label{prop:graph_isogeny}
    Let $(E, \iota) \in \SSpr{\Order}$, $D$ be the discriminant of $K$
    and $\Legendre{D}{\ell}$ the Legendre symbol.
    If $\ell$ does not divide the conductor of $\Order$,
    $(E, \iota)$ has
    no ascending $\ell$-isogeny,
    $\Legendre{D}{\ell} + 1$ horizontal $\ell$-isogenies,
    and $\ell - \Legendre{D}{\ell}$ descending $\ell$-isogenies.
    If $\ell$ divides the conductor of $\Order$,
    $(E, \iota)$ has
    exactly one ascending $\ell$-isogeny,
    no horizontal $\ell$-isogeny,
    and $\ell$ descending $\ell$-isogenies.
    Furthermore,
    every codomain of a descending $\ell$-isogeny has exactly
    $[\Order^{\times} : (\Z + \ell\Order)^{\times}]$ $\ell$-isogenies from $(E, \iota)$.
\end{prop}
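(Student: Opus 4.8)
The plan is to deduce the statement from the classical description of the $\ell$-isogeny graph of elliptic curves with complex multiplication over a number field — which is Proposition~23 of \cite{kohel1996} — and then transport the result through the reduction map. By the reduction correspondence set up above, every $(E,\iota)\in\SSpr{\Order}$ is, up to the $p$-th power Frobenius, the reduction $\redmap{\ipe}(\tilde E)$ of an elliptic curve $\tilde E$ over a number field $L$ with $\End(\tilde E)\cong\Order$; the reduction map is injective and, since $\ell\neq p$, it carries $\ell$-isogenies and their kernels faithfully; and by Lemma~\ref{lem:Lang} (using that $p$ divides none of the conductors $c$, $\ell c$, $c/\ell$ in play) it carries the order $\End(\tilde F)$ of a target to the order $\End(F)\cap\iota_F(K)$ of its reduction, so that ``horizontal'', ``ascending'' and ``descending'' are governed by the same pair of orders upstairs and downstairs. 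Replacing $(E,\iota)$ by $(E^{(p)},\iota^{(p)})$ if necessary — which changes none of the counts, $\phi_p$ being horizontal — it therefore suffices to prove the numerical assertions for a CM elliptic curve over $L$, hence, after base change to $\Comp$, for $E\cong\Comp/\ia$ with $\ia$ a proper $\Order$-ideal and $c$ the conductor of $\Order$.

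Next I would run the classical lattice computation. Up to isomorphism of the target, the $\ell$-isogenies from $E\cong\Comp/\ia$ are the maps $E\to\Comp/\Lambda$ as $\Lambda$ ranges over the $\ell+1$ lattices with $\ia\subseteq\Lambda\subseteq\frac{1}{\ell}\ia$ and $[\Lambda:\ia]=\ell$, and $\End(\Comp/\Lambda)$ is the multiplier ring $\{x\in K:x\Lambda\subseteq\Lambda\}$. This ring agrees with $\Order$ at every prime other than $\ell$ (where $\Lambda$ localizes to $\ia$), so, since the orders in $K\otimes\Q_\ell$ squeezed between the local multiplier bounds form a short chain, it is one of $\Order$, the suborder $\Z+\ell\Order$, and — only if $\ell\mid c$ — the over-order $\Z+(c/\ell)\Order_K$; the number of $\Lambda$ in each case is read off over $\Z_\ell$ from the structure of $\Order\otimes\Z_\ell$, which when $\ell\nmid c$ is determined by the splitting of $\ell$ in $\Order_K$, i.e.\ by $\Legendre{D}{\ell}$. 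Concretely, when $\ell\nmid c$ the lattices with multiplier ring $\Order$ are exactly $\mathfrak{l}^{-1}\ia$ for the primes $\mathfrak{l}$ of $\Order$ above $\ell$, of which there are $1+\Legendre{D}{\ell}$ (two, one or none according as $\ell$ splits, ramifies, or is inert in $\Order_K$), there is no over-order of index $\ell$ hence no ascending isogeny, and the remaining $\ell-\Legendre{D}{\ell}$ lattices give descending isogenies; when $\ell\mid c$ exactly one $\Lambda$ has multiplier ring $\Z+(c/\ell)\Order_K$ (the unique ascending isogeny), none has multiplier ring $\Order$ (there is no invertible prime of $\Order$ above $\ell$), and the other $\ell$ are descending.

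For the final assertion, fix a descending $\ell$-isogeny $\varphi\colon E\to F$, so $\End(F)\cong\Z+\ell\Order$; applying the case $\ell\mid c$ to $F$, the dual $\hat\varphi$ is the \emph{unique} ascending $\ell$-isogeny out of $F$, up to isomorphism of its target. Hence the dual of any descending $\ell$-isogeny $E\to F$ equals $\hat\varphi$ post-composed with an automorphism of $E$, and dualizing back shows all such isogenies $E\to F$ are $\varphi$ pre-composed with automorphisms of $E$; consequently the subgroups $C\subseteq E[\ell]$ with $E/C\cong F$ form a single orbit under $\mathrm{Aut}(E)=\Order^{\times}$ acting on $E[\ell]$, and the stabilizer of such a $C$ is the group of automorphisms of $E$ that descend along $\varphi$, namely $(\Z+\ell\Order)^{\times}$. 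The orbit--stabilizer relation then gives $[\Order^{\times}:(\Z+\ell\Order)^{\times}]$ such $C$, as claimed (and these automorphism groups match their oriented supersingular counterparts, the centralizer of $\iota(K)$ in $\End^{0}(E)$ being $\iota(K)$ itself). I expect the only genuinely delicate step to be exactly this automorphism bookkeeping, which is trivial unless $j(E)\in\{0,1728\}$; the rest is either the routine local computation of the previous paragraph or the transcription through $\redmap{\ipe}$, whose one subtlety — that reduction is faithful on $\ell$-isogenies and on the orders $\End\cap\iota(K)$ — is handled by $\ell\neq p$ together with Lemma~\ref{lem:Lang}.
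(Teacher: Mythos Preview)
Your approach is the same as the paper's: the paper does not give a standalone proof but, in the paragraph preceding the proposition, observes that $\redmap{\ipe}$ is a bijection onto its image and that $\ell$-isogenies (with $\ell\neq p$) lift, so that $\Graph$ is isomorphic to the CM $\ell$-isogeny graph over a number field, and then simply invokes Proposition~23 of \cite{kohel1996}. Your write-up is a correct and more detailed execution of exactly this strategy---you make explicit the use of Lemma~\ref{lem:Lang} and Proposition~\ref{prop:surjection_to_SSpr} (the Frobenius twist), redo the classical lattice count rather than just citing Kohel, and supply an orbit--stabilizer argument for the final index statement---so there is nothing to correct.
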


By this proposition,
the number of primitive $\Orderl{n}{\ell}$-oriented supersingular elliptic curves 
connected to $\SSprm[\Order_\ell{(0)}]$ is
$$\frac{h(\Order_\ell^{(0)})}{[(\Orderl{0}{\ell})^{\times} : (\Orderl{n}{\ell})^{\times}]} \ell^{n - 1}
        \left(\ell - \Legendre{D}{\ell} \right).$$
The formula for the class number of an order (see \S7 in \cite{cox1989primes})
says this number is equal to
$h(\Orderl{n}{\ell}) = \#\SSprm[\Orderl{n}{\ell}]$.
Therefore, all elements in $\SSprm[\Orderl{n}{\ell}]$ have exactly one descending path from
an element in $\SSprm[\Order_\ell^{(0)}]$.
    \section{OSIDH}
Col\`{o} and Kohel \cite{CK19} proposed
a Diffie-Hellman type key-exchange protocol using oriented supersingular elliptic curves,
named OSIDH.
OSIDH uses the action of the ideal class group described in \S\ref{sec:group_action}.
A method to calculate the group action was proposed in \cite{CK19}.
However,
in some cases, it does not work.

In this section, we recall the method to calculate the group action in \cite{CK19}
and the protocol of OSIDH.

\subsection{Group action}\label{sec:group_action_calc}
We recall the method to calculate the group action proposed in \cite{CK19}.

We use the same notation as in Section \ref{sec:graph}.
Let $(E_0, \iota_0) \in \SSprm[\Order_\ell^{(0)}]$.
By Proposition \ref{prop:graph_isogeny}, there is a chain of descending $K$-oriented $\ell$-isogenies:
\begin{equation}\label{eq:lchain}
    \begin{CD}
        (E_0, \iota_0) @>\varphi_0>> (E_1, \iota_1) @>\varphi_1>> (E_2, \iota_2) @>\varphi_2>> \cdots @>\varphi_{n-1}>> (E_n, \iota_n),
    \end{CD}
\end{equation}
where $(E_i, \iota_i) \in \SSprm[\Orderl{i}{\ell}]$ for $i = 0, 1, \dots, n$.
We denote this chain by $((E_i, \iota_i), \varphi_i)$.
Let $q \neq \ell$ be a prime splitting in $K$
and $\iq$ a prime ideal in $\Order_\ell^{(0)}$ lying above $q$.
For brevity, we use the same symbol $\iq$ for the prime ideal $\iq \cap \Orderl{i}{\ell}$ for $i=0, 1, \dots, n$.
Let $(F_i, \iota'_i) = \iq * (E_i, \iota_i)$ and
$\psi_i: (E_i, \iota_i) \to (F_i, \iota'_i)$ be a $K$-oriented isogeny with $\ker\psi_i = E_i[\iq]$.
Then there exists a descending $K$-oriented $\ell$-isogeny
$\varphi'_i: (F_i, \iota'_i) \to (F_{i+1}, \iota'_{i+1})$
with $\ker\varphi'_i = \psi_i(\ker\varphi_i)$.
Therefore, we obtain the following commutative diagram of $K$-oriented isogenies:
\begin{equation}\label{eq:ladder}
    \begin{CD}
        (E_0, \iota_0) @>\varphi_0>> (E_1, \iota_1) @>\varphi_1>> (E_2, \iota_2) @>\varphi_2>> \cdots @>\varphi_{n-1}>> (E_n, \iota_n)\\
        @V\psi_0VV @V\psi_1VV @V\psi_2VV @. @V\psi_{n}VV\\
        (F_0, \iota'_0) @>\varphi'_0>> (F_1, \iota'_1) @>\varphi'_1>> (F_2, \iota'_2) @>\varphi'_2>> \cdots @>\varphi'_{n-1}>> (F_n, \iota'_n).
    \end{CD}
\end{equation}
We denote the chain $((F_i, \iota'_i), \varphi'_i)$ by $\iq * ((E_i, \iota_i), \varphi_i)$.

The method to calculate the group action in \cite{CK19}
is based on the following assumption, though it is not explicitly stated.

\begin{assump}\label{assump:ladder}
    Let $\ell \neq p$ be a prime number,
    $q \neq \ell$ a prime number splitting in $K$,
    $\iq$ a prime ideal in $K$ lying above $q$,
    and $(E, \iota) \to (E', \iota')$ a descending $K$-oriented $\ell$-isogeny.
    We denote $\iq * (E, \iota)$ by $(E'', \iota'')$.
    Let $F$ be an elliptic curve such that
    there exist $q$-isogeny $\psi: E' \to F$ and 
    $\ell$-isogeny $\varphi: E'' \to F$.
    Then $\psi$ and $\varphi$ induce the same orientation on $F$, i.e.,
    $$(F, \psi_*(\iota')) \cong (F, \varphi_*(\iota'')).$$
\end{assump}

Note that this assumption says $\psi$ is horizontal and $\varphi$ is descending.
Therefore,
$(F, \psi_*(\iota'))$ is $K$-isomorphic to $\iq * (E', \iota')$
or $\bar{\iq} * (E', \iota')$,
where $\bar{\iq}$ is the complex conjugate of $\iq$.
Furthermore, if $\iq^2$ is not principal in $\iota(E) \cap \End(E)$ then
$(F, \psi_*(\iota'))$ is $K$-isomorphic to $\iq * (E', \iota')$.

Now we explain the method to the group action stated in \S 5 of \cite{CK19}.
The idea is to compute $j$-invariants of
the chain $\iq * ((E_i, \iota_i), \varphi)$ in the commutative diagram \eqref{eq:ladder}
by using the greatest common divisor of modular polynomials.

For a prime $r$,
we denote the $r$-th modular polynomial by $\Phi_r(X, Y)$.
We assume that the class number of $\Order_\ell^{(0)}$ is one.
Then, in the commutative diagram \eqref{eq:ladder},
it holds that $(F_0, \iota'_0) = (E_0, \iota_0)$.
For simplicity, we assume that $\iq^2$ is not principal in $\Orderl{1}{\ell}$.
First, we need to determine a ``direction'' of $q$-isogeny.
By Assumption \ref{assump:ladder},
an elliptic curve that has an $\ell$-isogeny from $E_0$ and a $q$-isogeny from $E_1$
is $K$-isomorphic to $\iq * (E_1, \iota_1)$ or $\bar{\iq} * (E_1, \iota_1)$.
To distinguish these curves,
we compute the $j$-invariant of $(F_1, \iota'_0) = \iq * (E_1, \iota_1)$
by, for example, V\'elu's formula \cite{Velu1971}.
Then, by Assumption \ref{assump:ladder},
an elliptic curve that has an $\ell$-isogeny from $F_1$ and a $q$-isogeny from $E_2$
is isomorphic to $\iq * (E_2, \iota_2)$.
Therefore, the $j$-invariant of $\iq * (E_2, \iota_2)$ is the unique solution of
$$\gcd(\Phi_\ell(X, j(F_1)), \Phi_q(X, j(E_2))) = 0.$$
We obtain the $j$-invariant of $\iq * (E_2, \iota_2)$
by calculating the g.c.d. of these polynomials.
In the same way, we obtain all the $j$-invariants of the chain $\iq * ((E_i, \iota_i), \varphi_i)$.
By repeating this process, we can obtain 
the $j$-invariants of the chain $\iq^e * ((E_i, \iota_i), \varphi_i)$ for an integer $e$.
Note that for $e \geq 2$,
the calculation of $\iq^e * ((E_i, \iota_i), \varphi_i)$
does not need to determine the direction,
because the chain $\bar{\iq} * (\iq^e * ((E_i, \iota_i), \varphi_i))$
is equal to $(\iq^{e-1} * ((E_i, \iota_i), \varphi_i))$
and we have already know the $j$-invariants of this chain.

Let $q_1, \dots, q_t \neq \ell$ be prime numbers splitting in $K$
and $\iq_i$ a prime ideal of $\Order_\ell^{(0)}$ lying above $q_i$ for $i = 1, \dots, t$.
We further assume that $\iq_i^2 \neq \iq_{i'}^{\pm 2}$ in $\cl(\Orderl{n}{\ell})$ for $i \neq i'$.
As the above, we use the same symbol $\iq_i$ for $\iq_i \cap \Orderl{k}{\ell}$ for $k = 0, 1, \dots, n$.
By the method explained in the previous paragraph,
we can calculate the $j$-invariants of chains of $K$-oriented $q_i$-isogenies
$$(E_n, \iota_n) \to \iq_i * (E_n, \iota_n) \to \cdots \to \iq_i^{e_i} * (E_n, \iota_n)$$
for $i = 1, \dots, t$,
where $e_i$ is a positive integer.
As in the previous paragraph,
we can obtain the $j$-invariant of $(\prod_{i=1}^t\iq_i^{e_i}) * (E_n, \iota_n)$
by calculating the g.c.d.'s of modular polynomials.

\subsection{Protocol}\label{sec:protocol}
The protocol of OSIDH is as follows
(see \S 5.2 of \cite{CK19} for more details):

\noindent
{\bf Public data:}
At the first, Alice and Bob publicly share the following system information.
\begin{itemize}
    \item The $j$-invariants of a chain of descending $K$-oriented $\ell$-isogenies
        $$(E_0, \iota_0) \to (E_1, \iota_1) \to \cdots \to (E_n, \iota_n),$$
        where $(E_0, \iota_0)$ is a primitive $\Order_\ell^{(0)}$-oriented supersingular elliptic curve
        with $h(\Order_\ell^{(0)}) = 1$.
    \item Prime numbers $q_1, \dots, q_t$ splitting in $K$,
        and prime ideals $\iq_1, \dots, \iq_t$ of $\Order_\ell^{(0)}$ above $q_1, \dots, q_t$, respectively.
        We assume that $\iq_i^2 \neq \iq_{i'}^{\pm 2}$ in $\cl(\Orderl{n}{\ell})$ for $i \neq i'$.
    \item The $j$ invariant of $\iq_i * (E_{k_i}, \iota_{k_i})$ for $i = 1, \dots, t$,
        where $k_i$ is the smallest integer such that $\iq^2$ is not principal in $\Orderl{k_i}{\ell}$.
\end{itemize}

\noindent
{\bf Secret key:}
A secret key is an integer vector in $[-B, B]^t$, where $B$ is a positive integer.
We let Alice's secret key be $(e_1, \dots, e_t)$
and Bob's secret key $(d_1, \dots, d_t)$.

\noindent
{\bf Public key:}
Alice's public key is
the $j$-invariant of $(F, \iota') = (\prod_{i=1}^t\iq_i^{e_i}) * (E_n, \iota_n)$
and the $j$-invariants of chains
$$(F, \iota') \to \iq_i * (F, \iota') \to \cdots \to \iq_i^B * (F, \iota'),$$
$$(F, \iota') \to \iq_i^{-1} * (F, \iota') \to \cdots \to \iq_i^{-B} * (F, \iota')$$
for $i = 1, \dots, t$.
These values are calulated by the method explained in \S\ref{sec:group_action_calc}.
Similarly, Bob's public key is
the $j$-invariant of $(G, \iota'') = (\prod_{i=1}^t\iq_i^{d_i}) * (E_n, \iota_n)$
and the $j$-invariants of chains
$$(G, \iota'') \to \iq_i * (G, \iota'') \to \cdots \to \iq_i^B * (G, \iota''),$$
$$(G, \iota'') \to \iq_i^{-1} * (G, \iota'') \to \cdots \to \iq_i^{-B} * (G, \iota'')$$
for $i = 1, \dots, t$.

\noindent
{\bf Shared secret:}
By using the method explained in \S\ref{sec:group_action_calc},
Alice calculates the $j$-invariant of $(\prod_{i=1}^t\iq_i^{e_i}) * (G, \iota'')$,
and Bob calculates the $j$-invariant of $(\prod_{i=1}^t\iq_i^{d_i}) * (F, \iota'')$.
These values are equal to the $j$-invariant of 
$(\prod_{i=1}^t\iq_i^{e_i + d_i}) * (E_n, \iota_n)$.
Alice and Bob shares this value as a shared secret.

    \section{Parameter choice on OSIDH}

\subsection{Counter example}
We show that Assumption \ref{assump:ladder} does not hold in general
by giving a counter example.

Let $p = 419$ and $E_0$ be an elliptic curve over $\F_p$ with $j(E_0) = 52 \equiv 1728 \pmod{419}$.
Then $E_0$ is supersingular and has a primitive $\Z[i]$-orientation,
where $i$ is a square root of -1 in $\Comp$.
By calculating a chain of $3$-isogenies from $E_0$,
we obtain a sequence of $j$-invariants of descending $\Q(i)$-oriented isogenies
$(52, 367, 351 + 244a, 180 + 169a)$,
where $a$ is a square root of $-1$ in $\F_{p^2}$.
Let $\iq$ be a prime ideal of $\Q(i)$ lying above $5$.
By applying the method stated in \S\ref{sec:group_action_calc},
we have
$$\iq * (52, 367, 351 + 244a) = (52, 356, 333 + 132a) \mbox{ or } (52, 356, 180).$$
Which equation holds depends on the choice of $\iq$.
We take $\iq$ such that the former equation holds.
By Assumption \ref{assump:ladder},
there is a unique elliptic curve that has a $5$-isogeny from the curve with $j$-invariant $180 + 169a$
and a $3$-isogeny from the curve with $j$-invariant $333 + 132a$.
However, the equation
$$\gcd(\Phi_5(X, 180 + 169a), \Phi_3(X, 333 + 132a)) = 0$$
has two solutions $202 + 26a$ and $315 + 162a$ in $\overline{\F}_p$.
I.e., in the following diagram, the most right part of the bottom chain
cannot be determined by the modular polynomials.
$$\begin{CD}
    52 @>3>> 367 @>3>> 351 + 244a @>3>> 180 + 169a\\
    @V5VV @V5VV @V5VV @V5VV\\
    52 @>3>> 356 @>3>> 333 + 132a @>3>> 202 + 26a \mbox{ or } 315 + 162a\\
\end{CD}$$
This contradicts Assumption \ref{assump:ladder}.

\subsection{Conditions on $p$}\label{sec:condition}
We give a sufficient that condition that Assumption \ref{assump:ladder} holds.
First,
we recall the structure of the endomorphism ring of a supersingular elliptic curve.
Let $E$ be a supersingular elliptic curve over $k$.
Then $\End(E)$ is isomorphic to a maximal order in the quaternion algebra $B_{p, \infty}$
over $\Q$ ramified at only $p$ and $\infty$.
An order $\Order$ in an imaginary quadratic field $K$ is {\it optimally embedded}
in a maximal order $\mathfrak{O}$ in $B_{p,\infty}$
if $K$ embeds into $B_{p,\infty}$ and $\mathfrak{O} \cap K = \Order$.
In terms of orientation,
this means that there exists a primitive $\Order$-orientation on a supersingular elliptic curve over $k$.
Kaneko \cite{Kaneko1989} proved the following theorem.
\begin{thm}[Theorem 2' in \cite{Kaneko1989}]\label{thm:Kaneko}
    Suppose that two orders $\Order_1$ and $\Order_2$ in $K$
    are optimally embedded in a maximal order in $B_{p,\infty}$ with different images.
    Then the inequality $D_1D_2 \geq p^2$ holds,
    where $D_1$ and $D_2$ are the discriminants of $\Order_1$ and $\Order_2$, respectively.
\end{thm}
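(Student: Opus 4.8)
The plan is to derive Theorem~\ref{thm:Kaneko} by counting in the maximal order $\mathfrak{O}$ and comparing traces. First I would set up notation: suppose $K \hookrightarrow B_{p,\infty}$ and $\mathfrak{O}$ is a maximal order with $\mathfrak{O} \cap K = \Order_1$ as one of the embeddings, while $\theta_2$ is a generator of $\Order_2$ sitting inside $\mathfrak{O}$ via a \emph{different} embedding, so the image of $\theta_2$ does not lie in the image of $K$ we started with. Write $D_i = \mathrm{disc}(\Order_i)$ and let $\theta_i \in \mathfrak{O}$ generate (the image of) $\Order_i$, normalized so that $\mathrm{tr}(\theta_i) \in \{0,1\}$ and $\theta_i^2 - \mathrm{tr}(\theta_i)\theta_i + \Norm(\theta_i) = 0$ with $D_i = \mathrm{tr}(\theta_i)^2 - 4\Norm(\theta_i)$ (up to the usual factor-of-$4$ bookkeeping for the two parities).

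The key step is to consider the element $\gamma = \theta_1 \theta_2 - \theta_2 \theta_1 \in \mathfrak{O}$, or more symmetrically a suitable $\Z$-combination $\theta_1 \bar{\theta}_2$-type product, and exploit that $\gamma \neq 0$: indeed if $\theta_1$ and $\theta_2$ commuted they would lie in a common commutative subfield, forcing the two embeddings of $K$ to coincide (since both are the unique quadratic subfield containing the respective $\theta_i$, and a commuting pair of non-scalars in a quaternion algebra generates at most a quadratic field), contradicting that the images differ. Since $\gamma$ is a nonzero element of the order $\mathfrak{O}$ lying in the \emph{trace-zero} part (as $\mathrm{tr}(\theta_1\theta_2 - \theta_2\theta_1) = 0$), its reduced norm $\Norm(\gamma)$ is a positive integer, hence $\Norm(\gamma) \geq 1$. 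The plan is then to compute $\Norm(\gamma)$ explicitly in terms of $\Norm(\theta_i)$, $\mathrm{tr}(\theta_i)$, and the reduced trace $\mathrm{tr}(\theta_1 \theta_2)$; a standard identity gives $\Norm(\theta_1\theta_2 - \theta_2\theta_1) = \bigl(\mathrm{tr}(\theta_1\theta_2)\bigr)^2 - 4\Norm(\theta_1)\Norm(\theta_2) - \text{(cross terms)}$, and after completing the square in the free parameter $\mathrm{tr}(\theta_1\theta_2)$ one isolates a quantity bounded above by $-D_1 D_2$ plus that square. Here one uses that $p \mid \Norm(\gamma)$ (or more precisely that $p^2 \mid \Norm(\gamma)$), which comes from $\gamma$ being in the trace-zero submodule together with the local structure of $\mathfrak{O}$ at $p$: since $B_{p,\infty}$ is ramified at $p$, the reduced norm of any element of $\mathfrak{O}$ with zero reduced trace that is moreover a commutator is divisible by $p^2$ — this is the arithmetic input analogous to the embedding-number congruences. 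Combining $\Norm(\gamma) \geq p^2$ (using it is a positive multiple of $p^2$) with the identity $\Norm(\gamma) = \mathrm{tr}(\theta_1\theta_2)^2 - \frac{(2\mathrm{tr}(\theta_1\theta_2) - \mathrm{tr}(\theta_1)\mathrm{tr}(\theta_2))^2 - D_1 D_2}{\cdots}$ — after optimizing over the integer $\mathrm{tr}(\theta_1\theta_2)$, whose minimum of the relevant quadratic in it is exactly $\frac{-D_1D_2}{4}$ or thereabouts — yields $D_1 D_2 \geq p^2$.

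The main obstacle I anticipate is pinning down the correct divisibility statement $p^2 \mid \Norm(\theta_1\theta_2-\theta_2\theta_1)$ with clean constants, and doing the parity/discriminant bookkeeping so that the final inequality comes out as $D_1 D_2 \geq p^2$ rather than an off-by-a-constant-factor version; this requires working locally at $p$ in $\mathfrak{O} \otimes \Z_p$, which is the maximal order of the division algebra over $\Q_p$, and using that its reduced norm form takes values in $p\Z_p$ precisely on the maximal ideal and that a commutator lands two levels deep in the valuation filtration. Once that local fact is in hand, the global conclusion is a short optimization over one integer parameter. Since Kaneko's paper contains the full argument, in the write-up I would sketch this quaternion-order computation and cite \cite{Kaneko1989} for the details, noting that the inequality is sharp and exactly records when two distinct optimal embeddings of quadratic orders into a single supersingular endomorphism ring can coexist.
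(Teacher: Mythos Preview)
The paper does not prove this statement at all: Theorem~\ref{thm:Kaneko} is quoted verbatim from Kaneko's paper \cite{Kaneko1989} and used as a black box, so there is no ``paper's own proof'' to compare your sketch against. That said, your outline is close in spirit to Kaneko's actual argument (which also passes through the sublattice $\Z\langle 1,\theta_1,\theta_2,\theta_1\theta_2\rangle$ and its discriminant), but it contains a real gap.

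The problem is your divisibility claim $p^2 \mid \Norm(\gamma)$ for $\gamma = \theta_1\theta_2 - \theta_2\theta_1$. The local reasoning you give (``a commutator lands two levels deep in the valuation filtration'') is false: since $\mathfrak{O}\otimes\Z_p/\pi(\mathfrak{O}\otimes\Z_p)\cong\F_{p^2}$ is commutative, commutators lie in $\pi(\mathfrak{O}\otimes\Z_p)$, but \emph{not} in $\pi^2(\mathfrak{O}\otimes\Z_p)=p(\mathfrak{O}\otimes\Z_p)$ in general. Concretely, in $\bigl(\frac{-1,-p}{\Q}\bigr)$ one has $[\ii,\jj]=2\kk$ with $\Norm(2\kk)=4p$, so only $p^1$ divides the commutator norm. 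Carrying your computation through with the correct divisibility $p\mid\Norm(\gamma)$ yields only $4D_1D_2 - s^2 \in p\Z_{>0}$ (where $s=\mathrm{tr}(\alpha_1\alpha_2)$, $\alpha_i^2=D_i$), hence merely $D_1D_2\gtrsim p$. That weaker bound is essentially Kaneko's Theorem~2, valid for orders in \emph{arbitrary} imaginary quadratic fields.

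What your sketch never uses is the hypothesis that $\Order_1$ and $\Order_2$ lie in the \emph{same} field $K$, and this is precisely what buys the extra factor of $p$ in Theorem~2$'$. Writing $D_i=f_i^2 d_K$, the elements $f_2\alpha_1\pm f_1\alpha_2$ are nonzero trace-zero elements of $\mathfrak{O}$, and the same-field hypothesis forces one of them into $\pi(\mathfrak{O}\otimes\Z_p)$ as well; feeding this additional integral constraint back into the Gram-determinant computation (or, equivalently, refining the sublattice) produces the second factor of $p$ and the stated bound $D_1D_2\ge p^2$. You should rework the argument to exploit $D_1/D_2\in(\Q^\times)^2$ explicitly, or simply cite \cite{Kaneko1989} as the paper does.
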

This theorem says that
if a supersingular elliptic curve over $k$ has two distinct $K$-orientation
then $p$ is less than or equal to the bound in the theorem.
By using this theorem,
we obtain a sufficient condition for Assumption \ref{assump:ladder} to hold.
\begin{thm}\label{thm:cond_assump}
    In Assumption \ref{assump:ladder},
    we assume that
    $(E, \iota)$ is a primitive $\Orderl{n-1}{\ell}$-oriented supersingular elliptic curve,
    and that $p > q\ell^{2n}D_0$, where $D_0$ is the discriminant of $\Order_0$.
    Then Assumption \ref{assump:ladder} holds.
\end{thm}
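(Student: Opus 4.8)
The plan is to show that the hypothesis $p > q\ell^{2n}D_0$ forces the curve $F$ in Assumption~\ref{assump:ladder} to carry a \emph{unique} $K$-orientation, whence the two a priori different orientations $\psi_*(\iota')$ and $\varphi_*(\iota'')$ on $F$ must coincide up to $K$-isomorphism. The key point is to identify, for each of the two isogenies $\psi$ and $\varphi$, the order in $K$ that is optimally embedded in $\End(F)$ via the corresponding orientation, bound the product of the two discriminants, and compare with Kaneko's bound in Theorem~\ref{thm:Kaneko}.

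First I would determine the two orders. Since $(E,\iota)$ is primitive $\Orderl{n-1}{\ell}$-oriented and $\varphi_0\colon (E,\iota)\to(E',\iota')$ is a descending $\ell$-isogeny, Proposition~\ref{prop:graph_isogeny} (together with the fact that $\psi_i$ maps kernels to kernels preserving the level) gives that $(E',\iota')$ is primitive $\Orderl{n}{\ell}$-oriented, so its discriminant is $D_1 = \ell^{2n}D_0$. Because $\psi\colon E'\to F$ has degree $q$ with $q$ split in $K$ — hence, by Proposition~\ref{prop:horizontal} and the remark after Assumption~\ref{assump:ladder}, horizontal — the orientation $\psi_*(\iota')$ is again primitive $\Orderl{n}{\ell}$-oriented, of discriminant $\ell^{2n}D_0$. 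On the other side, $(E'',\iota'') = \iq*(E,\iota)$ is primitive $\Orderl{n-1}{\ell}$-oriented (horizontality of the $\iq$-action, Proposition~\ref{prop:horizontal}, since $\iq$ is invertible), and $\varphi\colon E''\to F$ has degree $\ell$; since $\varphi$ cannot be ascending under these hypotheses, the orientation $\varphi_*(\iota'')$ is primitive for an order of discriminant dividing $\ell^{2}\cdot\ell^{2(n-1)}D_0 = \ell^{2n}D_0$, hence $D_2 \le \ell^{2n}D_0$.

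Next I would assume, for contradiction, that $(F,\psi_*(\iota'))\not\cong(F,\varphi_*(\iota''))$. Then $\End(F)$, a maximal order in $B_{p,\infty}$, has two optimally embedded orders with distinct images, of discriminants $D_1 = \ell^{2n}D_0$ and $D_2 \le \ell^{2n}D_0$. But wait — if $D_2 = D_1$ and the two embedded orders were the same order of $K$ with the same image, there would be nothing to prove; the subtlety is that \emph{equal discriminants need not mean equal images}, exactly as in the $\Z[i]$ counterexample of Section~3. So I would apply Theorem~\ref{thm:Kaneko} to get $D_1 D_2 \ge p^2$, i.e. $p^2 \le (\ell^{2n}D_0)^2$, giving $p \le \ell^{2n}D_0$, which contradicts $p > q\ell^{2n}D_0 \ge \ell^{2n}D_0$ (as $q \ge 2$). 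Hence the two orientations agree up to $K$-isomorphism, which is the assertion of Assumption~\ref{assump:ladder}.

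The main obstacle I anticipate is the bookkeeping in the previous paragraph's comparison with the case of \emph{equal} orientations: Kaneko's theorem only applies to embeddings with \emph{different} images, so I must argue separately that if $\psi_*(\iota')$ and $\varphi_*(\iota'')$ correspond to the \emph{same} optimally embedded order in $\End(F)$ then in fact $(F,\psi_*(\iota'))\cong(F,\varphi_*(\iota''))$ — and this requires knowing that a supersingular curve with a \emph{fixed} optimally embedded order admits its $K$-orientation uniquely up to $K$-isomorphism \emph{once} one also pins down that the two embeddings are literally the same map $K\hookrightarrow\End^0(F)$, not merely conjugate. Here one uses that $\psi$ is horizontal, so $\psi_*(\iota')$ restricted to $\Orderl{n}{\ell}$ lands in $\End(F)$ with the same image as... — the cleanest route is: by the remark following Assumption~\ref{assump:ladder}, $(F,\psi_*(\iota'))$ is $K$-isomorphic to $\iq*(E',\iota')$ or $\bar\iq*(E',\iota')$, and similarly $(F,\varphi_*(\iota''))$ is the codomain of a descending $\ell$-isogeny out of $(E'',\iota'')=\iq*(E,\iota)$, namely (by the commutativity in diagram~\eqref{eq:ladder}) $K$-isomorphic to $\iq*(E',\iota')$; so the only way they can fail to be $K$-isomorphic is if $(F,\psi_*(\iota'))\cong\bar\iq*(E',\iota')$ with $\iq*(E',\iota')\not\cong\bar\iq*(E',\iota')$, which forces genuinely distinct images in $\End(F)$, and that is precisely the situation Theorem~\ref{thm:Kaneko} rules out under our bound on $p$. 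I would present the argument in that order so that the Kaneko bound is only invoked in the case where distinctness of images is guaranteed.
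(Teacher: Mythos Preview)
Your overall strategy—bound the discriminants of the two optimal embeddings on $F$ and invoke Theorem~\ref{thm:Kaneko}—is precisely the paper's approach. The genuine gap is your claim that $\psi$ is horizontal. In Assumption~\ref{assump:ladder}, $\psi$ is an \emph{arbitrary} $q$-isogeny $E'\to F$; by Proposition~\ref{prop:graph_isogeny} there are $q-1$ descending $q$-isogenies out of $(E',\iota')$ alongside the two horizontal ones, and nothing rules these out. Proposition~\ref{prop:horizontal} applies only when $\ker\psi = E'[\ia]$ for an invertible ideal $\ia$, which is not given here; and the remark after Assumption~\ref{assump:ladder} that $\psi$ must be horizontal is a \emph{consequence} of the assumption you are trying to prove, so citing it is circular. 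The same circularity infects your final paragraph's treatment of the equal-image case: the dichotomy $(F,\psi_*(\iota'))\cong \iq*(E',\iota')$ or $\bar\iq*(E',\iota')$, and the identification of $(F,\varphi_*(\iota''))$ with a specific square of diagram~\eqref{eq:ladder}, both already presuppose Assumption~\ref{assump:ladder}.

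The paper's fix is simply not to pin down the type of $\psi$ or $\varphi$ at all: it records $D_1 \in \{q^{-2}\ell^{2n}D_0,\ \ell^{2n}D_0,\ q^2\ell^{2n}D_0\}$ and $D_2 \in \{\ell^{2n-4}D_0,\ \ell^{2n-2}D_0,\ \ell^{2n}D_0\}$, giving $D_1D_2 \le q^2\ell^{4n}D_0^2$, whence Kaneko yields $p \le q\ell^{2n}D_0$, the desired contradiction. The descending case $D_1 = q^2\ell^{2n}D_0$ is exactly why the factor $q$ appears in the hypothesis; with your (unjustified) value $D_1 = \ell^{2n}D_0$ you would only need $p>\ell^{2n}D_0$, which misses the content of the statement. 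Your worry about equal images in $\End(F)$ is legitimate—Theorem~\ref{thm:Kaneko} does require distinct images—but note that the paper's proof does not address this point either, and your proposed resolution does not yet stand independently of the assumption being proved.
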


\begin{proof}
    Suppose that Assumption \ref{assump:ladder} does not holds.
    Then $F$ has two $K$-orientations $\psi_*(\iota')$ and $\varphi_*(\iota'')$.
    Let $\Order_1$ and $\Order_2$ be orders in $K$
    such that $\psi_*(\iota')$ is a primitive $\Order_1$-orientation
    and $\varphi_*(\iota'')$ is a primitive $\Order_2$-orientation, respectively.
    The discriminant of $\Order_1$ is $q^{-2}\ell^{2n}D_0$, $\ell^{2n}D_0$ or $q^2\ell^{2n}D_0$
    since $\iota'$ is an $\Orderl{n}{\ell}$-orientation and the degree of $\psi$ is $q$.
    Similarly, the discriminant of $\Order_2$ is $\ell^{2n-4}D_0$, $\ell^{2n-2}D_0$ or $\ell^{2n}D_0$.
    By Theorem \ref{thm:Kaneko}, we have
    $q\ell^{2n}D_0 \geq p$.
    This proves the theorem.
\end{proof}

Next, we give a sufficient condition that
all the $j$-invariants of the elliptic curves in $\SSprm[\Orderl{n}{\ell}]$
are distinct.
The protocol of OSIDH correctly works even if there are two oriented elliptic curves
in $\SSprm[\Orderl{n}{\ell}]$ that have the same $j$-invariant.
But, if the number of $j$-invariants in $\SSprm[\Orderl{n}{\ell}]$ is very small,
then the exhaustive search on these $j$-invariants could find a secret share.

Fortunately,
if $p$ satisfies the condition in Theorem \ref{thm:cond_assump}
then all the $j$-invariant of the elliptic curves in $\SSprm[\Orderl{n}{\ell}]$
are distinct.

\begin{thm}\label{thm:dist_jinv}
    Let $\ell \neq p$ be a prime number,
    $D_0$ the discriminants of $\Order_0$,
    $n$ a positive integer such that $p > \ell^{2n}D_0$,
    and $m_1, m_2 \leq n$ nonnegative integers.
    Let $(E_1, \iota_1) \in \SSprm[\Orderl{m_1}{\ell}]$,
    and $(E_2, \iota_2) \in \SSprm[\Orderl{m_2}{\ell}]$.
    Then
    $(E_1, \iota_1) \cong (E_2, \iota_2)$
    if and only if $j(E_1) = j(E_2)$.
\end{thm}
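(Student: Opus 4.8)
The plan is to prove the nontrivial direction: if $j(E_1) = j(E_2)$ then $(E_1, \iota_1) \cong (E_2, \iota_2)$ (the converse is immediate, since a $K$-oriented isomorphism is in particular an isomorphism of the underlying curves, so it preserves the $j$-invariant). First I would use the hypothesis $j(E_1) = j(E_2)$ to identify the underlying supersingular elliptic curves: choosing an isomorphism $\lambda : E_1 \to E_2$, we may transport the orientation $\iota_2$ back along $\lambda$ and reduce to the case $E_1 = E_2 =: E$ with two orientations $\iota_1$ and $\iota_2' := \lambda^{-1}\circ\iota_2(\cdot)\circ\lambda$ on the \emph{same} curve $E$. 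Here $\iota_1$ is a primitive $\Orderl{m_1}{\ell}$-orientation and $\iota_2'$ is a primitive $\Orderl{m_2}{\ell}$-orientation. Since both orientations factor through $K$, the images $\iota_1(K)$ and $\iota_2'(K)$ are both imaginary quadratic subfields of $\End^0(E) \cong B_{p,\infty}$ isomorphic to $K$.

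The heart of the argument is an application of Kaneko's theorem (Theorem \ref{thm:Kaneko}). The orders $\Orderl{m_1}{\ell}$ and $\Orderl{m_2}{\ell}$ are optimally embedded in the maximal order $\End(E)$ via $\iota_1$ and $\iota_2'$ respectively (this is exactly the primitivity condition together with $\iota_j(\Orderl{m_j}{\ell}) \subseteq \End(E)$, which the paper already identifies with "optimally embedded"). The discriminant of $\Orderl{m}{\ell} = \Z + \ell^m \Order_0$ is $\ell^{2m} D_0$. If these two embeddings had \emph{different} images, Kaneko's theorem would force
$$
\ell^{2m_1} D_0 \cdot \ell^{2m_2} D_0 \geq p^2,
$$
i.e. $\ell^{m_1 + m_2} |D_0| \geq p$. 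But $m_1, m_2 \leq n$, so $\ell^{m_1 + m_2} |D_0| \leq \ell^{2n} |D_0| < p$ by hypothesis, a contradiction (note $D_0 < 0$, so I would write the discriminants in absolute value throughout, or phrase Kaneko as $|D_1 D_2| \geq p^2$). Hence $\iota_1(K) = \iota_2'(K)$ as subrings of $\End^0(E)$; intersecting with $\End(E)$ and using primitivity of both orientations gives $\iota_1(\Orderl{m_1}{\ell}) = \End(E)\cap\iota_1(K) = \End(E)\cap\iota_2'(K) = \iota_2'(\Orderl{m_2}{\ell})$, and since an order determines its conductor these two orders are equal, forcing $m_1 = m_2$.

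It remains to upgrade "same image" to "$K$-isomorphic." With $\iota_1(K) = \iota_2'(K)$, the composite $\iota_1^{-1}\circ\iota_2' : K \to K$ is a field automorphism of the imaginary quadratic field $K$, hence is either the identity or complex conjugation. If it is the identity then $\iota_1 = \iota_2'$ and $\lambda : (E_1,\iota_1) \to (E_2,\iota_2)$ is already a $K$-oriented isomorphism. The case of complex conjugation is the one subtle point: a priori $\iota_2' = \overline{\iota_1}$, meaning $(E_2, \iota_2) \cong (E_1, \bar\iota_1)$ rather than $(E_1, \iota_1)$. Here I would invoke the theory developed in the earlier sections — the reduction framework and the free and transitive action of $\cl(\Orderl{m_1}{\ell})$ on $\SSprm[\Orderl{m_1}{\ell}]$ (Theorem \ref{thm:group_action}). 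Both $(E,\iota_1)$ and $(E,\bar\iota_1)$ lie in $\SSprm[\Orderl{m_1}{\ell}]$, so they are related by the action of some ideal class; but $(E,\bar\iota_1)$ is also the image of $(E,\iota_1)$ under the involution induced by $\alpha \mapsto \bar\alpha$, which on the level of the $\cl$-action corresponds to inverting ideal classes, and combined with Proposition \ref{prop:surjection_to_SSpr} (surjectivity up to Frobenius) one checks that $(E,\iota_1)$ and $(E,\bar\iota_1)$ in fact represent the same class precisely because they have the same underlying curve $E$ and $\redmap{\ipe}$ is injective on $\Ell(\Orderl{m_1}{\ell})$. I expect this last step — carefully ruling out the complex-conjugate orientation, or equivalently showing two $K$-orientations on the same supersingular curve with the same image and the same associated order must actually be equal as orientations (not merely conjugate) — to be the main obstacle, and it is where the hypotheses on good reduction and the structure of $\SSprm$ rather than just Kaneko's bound do the real work.
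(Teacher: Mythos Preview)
Your approach is the paper's approach: the published proof is the single sentence ``This follows directly from Theorem~\ref{thm:Kaneko}.'' Your first two paragraphs correctly expand this --- transport $\iota_2$ to $E_1$, observe that both orientations give optimal embeddings of $\Orderl{m_i}{\ell}$ into the maximal order $\End(E)$, and use the bound $p > \ell^{2n}|D_0| \geq \ell^{m_1+m_2}|D_0|$ to force the images to coincide via Kaneko, whence also $m_1 = m_2$.

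You are also right that Kaneko alone does not finish the job: coincident images only give $\iota_2' \in \{\iota_1, \bar\iota_1\}$, and the example in \S3.1 shows that $(E,\iota)$ and $(E,\bar\iota)$ need not be $K$-isomorphic. The paper's one-line proof does not address this, so you have located a genuine subtlety it glosses over. However, your proposed resolution is not a proof. The appeal to injectivity of $\rho$ is circular: injectivity says distinct classes in $\Ell(\Order)$ map to distinct \emph{oriented} curves, not to curves with distinct $j$-invariants, which is precisely the point at issue. And Proposition~\ref{prop:surjection_to_SSpr} only says every class of $\SSpr{\Order}$ lands in $\SSprm$ up to a Frobenius twist; it does not by itself identify $(E,\bar\iota)$ with such a twist. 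What is really needed is that the involution $(E,\iota) \mapsto (E,\bar\iota)$ exchanges $\SSprm$ with the Frobenius-twisted $\cl(\Order)$-orbit inside $\SSpr{\Order}$, so that if both $(E,\iota_1)$ and $(E,\bar\iota_1)$ lie in $\SSprm$ they must already be $K$-isomorphic. This can be extracted from the Galois computation inside the proof of Proposition~\ref{prop:surjection_to_SSpr} (a decomposition-group element nontrivial on $K$ realizes conjugation on the orientation and acts on the reduction by at most a power of Frobenius), but it requires more care than either you or the paper supplies.
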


\begin{proof}
    This follows directly from Theorem \ref{thm:Kaneko}.
\end{proof}

\subsection{Security}
Finally, we discuss parameters of OSIDH
for satisfying a certain security level on a classical computer.
Let $\lambda$ be the security level,
i.e., we require OSIDH to satisfy that
at least $2^{\lambda}$ operations are needed to reveal a shared secret in OSIDH.

As in CSIDH, a meet-in-the-middle attack (see \S7.1 in \cite{CSIDH}) can be applied to OSIDH.
Consider the setting in Section \ref{sec:protocol}.
Let $(e_1, \dots, e_t)$ and $(E_A, \iota_A)$ be Alice's secret key and public key, respectively.
The relation between these keys can be written as
$$\left(\prod_{i=1}^{\lfloor t/2 \rfloor}\iq_i^{e_i}\right) * (E_n, \iota_n)
    = \left(\prod_{i=\lfloor t/2 \rfloor + 1}^t \iq_i^{-e_i}\right) * (E_A, \iota_A).$$
So an attacker can find the secret key
by calculating the $j$-invariants of curves that could appear on both sides in the above equation.
The time complexity and memory usage of this attack are proportional to the square root of the size of
the space of public keys.
Therefore, the order of the ideal class group $\cl(\Orderl{n}{\ell})$
has to be greater than $2^{2\lambda}$.
Unfortunately, this condition is not enough.
We need much larger ideal class group.

Let $\alpha$ be an element in $\Orderl{0}{\ell}$ such that $\Orderl{0}{\ell} = \Z + \alpha\Z$.
Then $\ell^n\alpha$ is in $\Orderl{n}{\ell}$.
If one can compute actions of any ideal classes in $\cl(\Orderl{n}{\ell})$
then s/he can compute the endomorphism $\iota_A(\ell^n\alpha)$ on $E_A$.
The endomorphism $\iota_A(\ell^n\alpha)$ is the composition
$$\begin{CD}
    E_A @>\hat{\varphi}>> E_0 @>\iota_0(\alpha)>> E_0 @>\varphi>> E_A,
\end{CD}$$
where $\varphi$ is a descending $\Q(\alpha)$-oriented $\ell^n$-isogeny.
Therefore, by computing $\iota_A(\ell^n\alpha)$,
one can obtain the descending chain from $(E_0, \iota_0)$ to $(E_A, \iota_A)$.
This reveals the secret key. See \S5.1 in \cite{CK19} for the details.

More generally,
the image under $\iota_A$ of a nonnegative element $\beta$ in $\Orderl{n}{\ell}$ reveals the secret.
Since $\beta$ is written as $a + b\ell^n\alpha$, where $a, b \in \Z$ and $b \neq 0$,
one can know the action of $\iota_A(b\ell^n\alpha)$ and the secret descending chain.
(This is an analogy of Petit's attack to SIDH \cite{P}.)

Let $\beta$ be a noninteger element in $\Orderl{n}{\ell}$
and $N$ the norm of $\beta$.
Assume that there exist an integer vector $(e_1, \dots, e_n) \in [-B, B]^n$ such that
$I \coloneqq \prod_i^n \iq_i^{e_i}$ divides $\beta\Orderl{n}{\ell}$ and
the norm $N'$ of the quotient $\beta\Orderl{n}{\ell} / I$ is smooth.
Then there exists horizontal $N'$-isogeny from $(E_A, \iota_A)$ to $I * (E_A, \iota_A)$.
This isogeny can be found by using a meet-in-the-middle attack with a time complexity of $O(\sqrt{N'})$.

For preventing this attack,
we have to choose primes $q_j$ and the range $[-B, B]$ of exponents
such that ideals of form $\prod_{j=1}^t\iq_j^{e_j}$, $e_j \in [-B, B]$
are sufficiently separated from ideals generated by an non-integer element in $\Orderl{n}{\ell}$.
If we take $B$ that satisfies
\begin{equation}\label{eq:cond_principal}
    N_{\mathrm{min}} / \prod_{j=1}^tq_j^B \geq 2^{2\lambda},
\end{equation}
where $N_{\mathrm{min}}$ is the minimum norm of noninteger element in $\Orderl{n}{\ell}$,
then the time complexity of the above attack is greater than $2^\lambda$.
Note that $N_{\mathrm{min}}$ is proportional to $\ell^{2n}$.

As a result, we have to use a proper subset of $\cl(\Orderl{n}{\ell})$ instead of the whole group.
To avoid the meet-in-the-middle attack to search the isogeny $E_n \to E_A$,
we have to choose $t$ and $B$ so that the cardinality of the subset is greater than $2^{2\lambda}$,
i.e.,
\begin{equation}\label{eq:cond_cl}
    (2B + 1)^t \geq 2^{2\lambda}.
\end{equation}

By Theorem \ref{thm:cond_assump},
if $p$ satisfies
\begin{equation}\label{eq:cond_p}
    p > \max_j\{q_j\} \ell^{2n}D_0,
\end{equation}
then we can calculate the group action by the method in \S\ref{sec:group_action_calc}.

Consequently,
if we take $p, \ell, n, \{q_j\}$ and $B$ so that these satisfy
the conditions \eqref{eq:cond_principal}, \eqref{eq:cond_cl} and \eqref{eq:cond_p},
then we can expect that the protocol has a security level of $\lambda$ bits.
These constraints make $p$ very large.

For example, let $K = \Q(\sqrt{-1})$, $\lambda = 128$, $\ell = 2$,
and $\{q_j\}$ be the smallest 100 primes splitting in $\Q(\sqrt{-1})$.
In this case, $D_0 = 4$, $N_{\mathrm{min}} = \ell^{2n}$ and $t = 100$.
Then we have to take $B = 3$, $n = 1428$,
and $p$ greater than 2800 bits.

On the other hand,
SIDH and CSIDH for the same security level use a finite field of characteristic about 500 bits.

\subsection{Future works}
The inequality $p > q\ell^{2n}D_0$
in Theorem \ref{thm:cond_assump} is only a sufficient condition but not a necessary condition.
Further research on the structure of graphs of oriented supersingular elliptic curves
could find a smaller lower bound on $p$.
Furthermore, the inequality \eqref{eq:cond_principal} could also be refined,
since our discussion did not take
the explicit structure of $\cl(\Orderl{n}{\ell})$ into account.
These could make OSIDH more practical.
We leave open these problems for future works.

    \section*{Acknowledgment}
    The author would like to thank the anonymous reviewers for their useful feedback.
    In particular, one reviewer suggested that Section \ref{sec:condition} of a previous version
    of this paper was a direct consequence of a theorem in \cite{Kaneko1989}.

    \bibliographystyle{myplain}
    \bibliography{isog,elliptic_curve}
\end{document}